\DeclareMathOperator{\qrank}{qrk}
\DeclareMathOperator{\maxrank}{maxrank}
\DeclareMathOperator{\minrank}{minrank}
\DeclareMathOperator{\Surj}{Surj}
\title{Topological noetherianity for cubic polynomials}
\author{Harm Derksen}
\address{Department of Mathematics, University of Michigan, Ann Arbor, MI}
\email{\href{mailto:hderksen@umich.edu}{hderksen@umich.edu}}
\urladdr{\url{http://www.math.lsa.umich.edu/~hderksen/}}
\author{Rob H.\ Eggermont}
\address{Department of Mathematics, University of Michigan, Ann Arbor, MI}
\email{\href{mailto:robegger@umich.edu}{robegger@umich.edu}}
\urladdr{\url{http://www-personal.umich.edu/~robegger/}}
\author{Andrew Snowden}
\address{Department of Mathematics, University of Michigan, Ann Arbor, MI}
\email{\href{mailto:asnowden@umich.edu}{asnowden@umich.edu}}
\urladdr{\url{http://www-personal.umich.edu/~asnowden/}}
\thanks{HD was supported by NSF grant DMS-1601229. AS was supported by NSF grants DMS-1303082 and DMS-1453893.}
\date{\today}
\begin{document}

\begin{abstract}
Let $P_3(\bk^{\infty})$ be the space of complex cubic polynomials in infinitely many variables over the field $\bk$. We show that this space is $\GL_{\infty}$-noetherian, meaning that any $\GL_{\infty}$-stable Zariski closed subset is cut out by finitely many orbits of equations. Our method relies on a careful analysis of an invariant of cubics we introduce called q-rank. This result is motivated by recent work in representation stability, especially the theory of twisted commutative algebras. It is also connected to uniformity problems in commutative algebra in the vein of Stillman's conjecture.
\end{abstract}

\maketitle
\tableofcontents

\section{Introduction}

Let $P_d(\bk^n)$ be the space of degree $d$ polynomials in $n$ variables over an algebraically closed field $\bk$ of characteristic $\ne 2,3$. Let $P_d(\bk^{\infty})$ be the inverse limit of the $P_d(\bk^n)$, equipped with the Zariski topology and its natural $\GL_{\infty}$ action (see \S \ref{ss:note}). This paper is concerned with the following question:

\begin{question} \label{mainques}
Is the space $P_d(\bk^{\infty})$ noetherian with respect to the $\GL_{\infty}$ action? That is, can every Zariski-closed $\GL_{\infty}$-stable subspace be defined by finitely many orbits of equations?
\end{question}

This question may seem somewhat esoteric, but it is motivated by recent work in the field of representation stability, in particular the theory of twisted commutative algebras; see \S \ref{ss:tca}. It is also connected to certain uniformity questions in commutative algebra in the spirit of (the now resolved) Stillman's conjecture; see \S \ref{ss:stillman}.

For $d \le 2$ the question is easy since one can explicitly determine the $\GL_{\infty}$ orbits on $P_d(\bk^{\infty})$. For $d \ge 3$ this is not possible, and the problem is much harder. The purpose of this paper is to settle the $d=3$ case:

\begin{theorem} \label{mainthm}
Question~\ref{mainques} has an affirmative answer for $d=3$.
\end{theorem}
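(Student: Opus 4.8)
The plan is to argue by Noetherian induction, stratifying $P_3(\bk^{\infty})$ by q-rank into a ``bounded'' part, which can be understood by reducing to lower-degree and lower-rank data, and its complement, which consists of cubics whose $\GL_{\infty}$-orbit is dense. For $r \ge 0$ write $Y_r \subseteq P_3(\bk^{\infty})$ for the locus of cubics of q-rank at most $r$. The preliminary step is to develop the formal properties of q-rank: that it is $\GL_{\infty}$-invariant; that it is suitably upper semicontinuous, so that each $Y_r$ is Zariski closed; and that it interacts well with the standard structural manipulations of a cubic (completing the cube, peeling off a linear-times-quadratic layer, and so on). Choosing a definition of q-rank that has all of these properties simultaneously---in particular one that is semicontinuous, unlike naive strength, yet still controls the degeneration behavior of the cubic---is itself a nontrivial part of the argument, and is the reason for introducing such an invariant rather than working with strength directly.

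Step 1 is to show that each $Y_r$ is $\GL_{\infty}$-noetherian, which I would prove by induction on $r$, the case $r = 0$ being trivial. A cubic of q-rank $\le r$ should be expressible, after a $\GL_{\infty}$-change of coordinates, as a piece of strictly smaller q-rank together with one extra linear-times-quadratic layer; this lets one fiber $Y_r$ (or a dense constructible part of it) over a product of a space of quadratic forms with $Y_{r-1}$-type data, and transfer the descending chain condition along the resulting equivariant map, provided the limit points are accounted for carefully. The base ingredients---that the $\GL_{\infty}$-action on $P_2(\bk^{\infty})$ is completely transparent, with orbit closures ordered by rank in a chain, and that a finite-dimensional variety is automatically noetherian---then make the induction go through. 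Step 2, which I expect to be the heart of the matter and the main obstacle, is the ``universality'' statement: a cubic of sufficiently large q-rank has $\overline{\GL_{\infty} \cdot f} = P_3(\bk^{\infty})$; quantitatively, for every cubic $g$ in finitely many variables there is a bound $B(g)$ with $g \in \overline{\GL_{\infty} \cdot f}$ whenever $\qrank(f) > B(g)$. The mechanism should be that a high-q-rank cubic necessarily contains, after a change of coordinates, a long list of ``independent'' linear-times-quadratic constituents---precisely the sort of structure appearing in the Ananyan--Hochster circle of ideas around Stillman's conjecture---and that, because every cubic in finitely many variables is a sum of products of linear forms and because there are always enough fresh variables to realize the needed substitutions inside $\GL_{\infty}$, one can degenerate such a configuration, through limits of invertible linear substitutions, to any prescribed cubic $g$.

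Granting Steps 1 and 2, the theorem is formal. If $Z \subseteq P_3(\bk^{\infty})$ is Zariski closed, $\GL_{\infty}$-stable, and not all of $P_3(\bk^{\infty})$, then q-rank must be bounded on $Z$: otherwise, for each cubic $g$ in finitely many variables one could choose a point of $Z$ of q-rank exceeding $B(g)$, and passing to orbit closures (which stay inside $Z$) would put $g$ in $Z$; since the cubics in finitely many variables are dense in $P_3(\bk^{\infty})$ and $Z$ is closed, this would force $Z = P_3(\bk^{\infty})$. Hence $Z \subseteq Y_r$ for some $r$, and therefore any descending chain of $\GL_{\infty}$-stable closed subsets that is not eventually equal to $P_3(\bk^{\infty})$ lies, from some index onward, inside a single $Y_r$, where it stabilizes by Step 1. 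This is the descending chain condition, which is exactly the content of Theorem~\ref{mainthm}. The real weight of the argument thus falls on (a) isolating the right notion of q-rank, and (b) the analysis of high-q-rank cubics underlying Step 2---extracting their internal structure and then degenerating it---which I expect to be the most demanding ingredient.
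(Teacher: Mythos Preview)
Your high-level strategy---stratify by q-rank, show each bounded-q-rank locus $Y_r$ is $\GL_{\infty}$-noetherian, and show that any $\GL_{\infty}$-stable closed set of unbounded q-rank is all of $P_3(\bk^{\infty})$---is exactly the paper's. Two points of difference deserve comment. For Step~1, the paper does not induct on $r$ and does not need semicontinuity of q-rank (for cubics q-rank is strength plus one, so your distinction is moot); instead it simply notes that $Y_r$ is the image of the map $P_2(\bk^{\infty})^r \times P_1(\bk^{\infty})^r \to P_3(\bk^{\infty})$, $(q_i;\ell_i) \mapsto \sum \ell_i q_i$, that the domain is $\GL_{\infty}$-noetherian by Eggermont's theorem on degree-$\le 2$ data, and that (closures of) images of noetherian spaces are noetherian. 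This sidesteps the non-uniqueness issues your inductive fibering would face. For Step~2, the paper's mechanism is more specific than a direct substitution argument: one first degenerates $f$ within its orbit closure to a \emph{separable} cubic $g=\sum \ell_i q_i$ with the $\ell_i$ and $q_i$ in disjoint sets of variables, controlling the q-rank lost; one then proves that the span of the $q_i$ contains a subspace of prescribed dimension in which every nonzero form has large rank (large \emph{minrank}); finally one invokes Eggermont's theorem again in its quantitative form to conclude that the orbit closure of the tuple $(q_1,\dots,q_d;\ell_1,\dots,\ell_d)$ surjects onto $P_2(W)^d \times P_1(W)^d$. So the ``independent linear-times-quadratic constituents'' you anticipate are made precise via minrank of the quadratic pieces, and separability is what lets one move the $\ell$'s and $q$'s independently.
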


In fact, we prove a quantitative result in finitely many variables that implies the theorem in the limit. This may be of independent interest; see \S \ref{ss:overview} for details.

\subsection{Overview of proof} \label{ss:overview}

The key concept in the proof, and the focus of most of this paper, is the following notion of rank for cubic forms.

\begin{definition}
Let $f \in P_3(\bk^n)$ with $n \le \infty$. We define the {\bf q-rank}\footnote{The q here is meant to indicate the presence of quadrics in the expression for $f$.} of $f$, denoted $\qrank(f)$, to be the minimal non-negative integer $r$ for which there is an expression $f = \sum_{i=1}^r \ell_i q_i$ with $\ell_i \in P_1(\bk^n)$ and $q_i \in P_2(\bk^n)$, or $\infty$ if no such $r$ exists (which can only happen if $n=\infty$).
\end{definition}

\begin{example} \label{ex1}
For $n \le \infty$, the cubic
\begin{displaymath}
x_1y_1z_1 + x_2y_2z_2 + \cdots + x_n y_n z_n = \sum_{i=1}^n x_i y_i z_i
\end{displaymath}
has q-rank $n$. This is proved in \S \ref{sec:example}. In particular, infinite q-rank is possible when $n=\infty$.
\end{example}

\begin{example}
The cubic $x^3+y^3$ has q-rank~1, as follows from the identity
\begin{displaymath}
x^3+y^3 = (x+y)(x^2-xy+y^2).
\end{displaymath}
The cubic $\sum_{i=1}^{2n} x_i^3$ therefore has q-rank at most $n$, and we expect it is exactly $n$.
\end{example}

\begin{remark}
The notion of q-rank is similar to some other invariants in the literature:
\begin{enumerate}
\item Ananyan--Hochster \cite{stillman} define a homogeneous polynomial to have {\bf strength} $\ge k$ if it does not belong to an ideal generated by $k$ forms of strictly lower degree. For cubics, q-rank is equal to strength plus one.
\item The paper \cite{slicerank} (inspired by Tao's blog post \cite{tao}) introduced the notion of ``slice rank'' for tensors. Q-rank is basically a symmetric version of this. \qedhere
\end{enumerate}
\end{remark}

Let $P_3(\bk^{\infty})_{\le r}$ be the locus of forms $f$ with $\qrank(f) \le r$. This is the image of the map
\begin{displaymath}
P_2(\bk^{\infty})^r \times P_1(\bk^{\infty})^r \to P_3(\bk^{\infty}), \qquad (q_1, \ldots, q_r, \ell_1, \ldots, \ell_r) \mapsto \sum_{i=1}^r \ell_i q_i.
\end{displaymath}
The main theorem of \cite{eggermont} implies that the domain of the above map is $\GL_{\infty}$-noetherian, and so, by standard facts (see \cite[\S 3]{draisma}), its image $P_3(\bk^{\infty})_{\le r}$ is as well. It follows that any $\GL_{\infty}$-stable closed subset of $P_3(\bk^{\infty})$ of bounded q-rank is cut out by finitely many orbits of equations. Theorem~\ref{mainthm} then follows from the following result:

\begin{theorem} \label{mainthm2}
Any $\GL_{\infty}$-stable subset of $P_3(\bk^{\infty})$ containing forms of arbitrarily high q-rank is Zariski dense.
\end{theorem}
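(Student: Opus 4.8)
The plan is to use a routine topological reduction to isolate one genuinely hard statement about how robustly the q-rank of a cubic survives degeneration, and then to attack that statement by an inductive ``peeling'' argument founded on a structure theory for cubics of large q-rank.

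\emph{Step 1 (topological reduction).} Since $P_3(\bk^\infty)=\varprojlim_n P_3(\bk^n)$, a subset is Zariski dense exactly when its image in each $P_3(\bk^n)$ is dense; and since a cubic in $n$ variables (viewed in $P_3(\bk^\infty)$ via $x_1,\dots,x_n$) projects to itself, the union $\bigcup_n P_3(\bk^n)$ of all finite-variable cubics is already dense. So it suffices to show $\overline X\supseteq P_3(\bk^n)$ for every $n$. As $X$ is $\GL_\infty$-stable we have $\overline X\supseteq\overline{\GL_\infty\cdot f}$ for each $f\in X$, and $X$ contains cubics of arbitrarily large q-rank, so everything reduces to the \textbf{Main Lemma}: for each $n$ there is an integer $R(n)$ such that $\qrank(f)\ge R(n)$ implies $P_3(\bk^n)\subseteq\overline{\GL_\infty\cdot f}$.

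\emph{Step 2 (pivot through the diagonal cubic).} Put $\Delta_m=x_1^3+\dots+x_m^3$. I would deduce the Main Lemma from two facts. First, because $\bk$ has characteristic $\ne 2,3$, cubes of linear forms span the space of cubic forms, so every $g\in P_3(\bk^n)$ is a sum of at most $m(n):=\binom{n+2}{3}$ cubes of linear forms; hence $g$ is a specialization of $\Delta_{m(n)}$ (substitute each $x_i$ by the appropriate linear form, padding with zeros), and a specialization $f(\ell_1,\ell_2,\dots)$ is a limit of linear coordinate changes applied to $f$, so it lies in $\overline{\GL_\infty\cdot f}$; thus $\overline{\GL_\infty\cdot\Delta_{m(n)}}\supseteq P_3(\bk^n)$. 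Second, the crux, the \textbf{Key Lemma}: there is a constant $c$ such that $\qrank(f)\ge cm$ implies $\Delta_m\in\overline{\GL_\infty\cdot f}$. Granting this, take $R(n)=c\,m(n)$: then $\qrank(f)\ge R(n)$ forces $\Delta_{m(n)}\in\overline{\GL_\infty\cdot f}$, hence $\overline{\GL_\infty\cdot\Delta_{m(n)}}\subseteq\overline{\GL_\infty\cdot f}$, hence $P_3(\bk^n)\subseteq\overline{\GL_\infty\cdot f}$, which is the Main Lemma.

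\emph{Step 3 (peeling off cubes).} I would prove the Key Lemma by induction on $m$; the inductive step asserts that if $\qrank(f)\ge k+c$ then, after a linear change of coordinates, some element of $\overline{\GL_\infty\cdot f}$ has the form $x_1^3+f'$ with $f'$ a cubic in the variables other than $x_1$ and $\qrank(f')\ge k$. Given this, apply the inductive hypothesis to $f'$ inside the ring on the variables $\ne x_1$; the resulting degeneration of $f'$ to a copy of $\Delta_{m-1}$ is realized by elements of $\GL_\infty$ fixing $x_1$, so it carries $x_1^3+f'$ to $x_1^3+\Delta_{m-1}'$ (a copy in fresh variables), which is $\GL_\infty$-equivalent to $\Delta_m$. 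The base case ($m\le 1$) is trivial: a nonzero cubic restricts to a nonzero cube on a general line. All of this can, and to obtain Theorem~\ref{mainthm2} in the stated quantitative form should, be carried out for $f\in P_3(\bk^N)$ with $N$ finite.

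\emph{The main obstacle.} Everything above is formal except the peeling step of Step~3, which is exactly where the ``careful analysis of q-rank'' must enter, and which I expect to be the bulk of the technical work. It genuinely requires producing a degeneration rather than merely a change of coordinates: for $f=\sum_i x_iy_iz_i$, for instance, no linear change of variables can isolate a pure cube, since no linear form $\ell$ has $\partial f/\partial\ell\in\bk\cdot\ell^2$. So one needs a structural understanding of high-q-rank cubics, in the spirit of Ananyan--Hochster's analysis of strength: roughly, that after a linear change of variables a cubic of large q-rank splits, on the level of the variable space, into many ``generic-looking'' pieces, generic enough that one piece can be degenerated to $x_1^3$ while the q-rank carried by the remaining variables drops by only a bounded amount --- equivalently, that q-rank cannot collapse too quickly under a suitably generic restriction. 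Formulating the correct such normal form and proving it is the heart of the matter.
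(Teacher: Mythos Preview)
Your reductions in Steps~1 and~2 are sound, but the entire difficulty of the theorem has been repackaged into the ``peeling'' step of Step~3, which you do not prove. The claim that one can degenerate any $f$ with $\qrank(f)\ge k+c$ to a form $x_1^3+f'$ with $f'$ in the remaining variables and $\qrank(f')\ge k$, for a \emph{universal constant} $c$, is not a routine normal-form statement: it would yield a polynomial bound $R(n)\sim c\binom{n+2}{3}$, vastly stronger than anything proved here, and you offer no mechanism for producing such a degeneration. Your own example $f=\sum x_iy_iz_i$ already shows that no linear change of variables can isolate a cube; the further assertion that some limit does so while preserving q-rank up to $O(1)$ is precisely the hard content, and invoking an unspecified Ananyan--Hochster-style structure theorem is not a proof outline but a restatement of the goal.

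The paper's route is entirely different and never passes through $\Delta_m$ or any inductive peeling. Instead it (i) degenerates $f$ to a \emph{separable} cubic $g=\sum_i\ell_iq_i$ with the linear forms $\ell_i$ and quadrics $q_i$ supported on disjoint variable sets, bounding the q-rank loss via a Cayley--Salmon-type upper bound on q-rank in a fixed number of variables; (ii) shows that high q-rank of $g$ forces the span of the $q_i$ to contain a $d$-dimensional subspace in which every nonzero quadric has large ordinary rank (a subadditive ``minrank'' argument); and (iii) invokes the known result of \cite{eggermont} that such a configuration in $P_2(V)^d\times P_1(V)^d$ has orbit-closure surjecting onto $P_2(W)^d\times P_1(W)^d$, whence the orbit-closure of $g$ surjects onto $P_3(W)$. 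The decisive idea --- that large q-rank of $f=\sum\ell_iq_i$ constrains the \emph{quadrics} to be collectively high-rank, reducing the problem to the already-understood degree-two case --- has no counterpart in your proposal.
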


To prove this theorem, one must show that if $f_1, f_2, \ldots$ is a sequence in $P_3(\bk^{\infty})$ of unbounded q-rank then for any $d$ there is a $k$ such that the orbit-closure of $f_k$ projects surjectively onto $P_3(\bk^d)$. We prove a quantitative version of this statement:

\begin{theorem} \label{mainthm3}
Let $f \in P_3(\bk^n)$ have q-rank $r \gg 0$ (in fact, $r>\exp(240)$ suffices), and suppose $d \le \tfrac{1}{3} \log(r)$. Then the orbit closure of $f$ surjects onto $P_3(\bk^d)$.
\end{theorem}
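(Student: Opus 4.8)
The plan is to exploit high q-rank to find, inside the $\GL_\infty$-orbit of $f$, specializations that realize an arbitrary cubic in $d$ variables. Concretely, one wants to set up disjoint blocks of variables $x_1,\dots,x_d$ (the ``target'' variables) and many auxiliary variables, and then choose a linear substitution — an element of $\GL_n$ acting on the coordinates, followed by a limit/closure argument — so that $f$ becomes $g(x_1,\dots,x_d) + (\text{junk involving auxiliary variables})$, where $g$ is the desired arbitrary cubic and the junk can be killed by sending auxiliary variables to $0$. Since setting variables to zero is a limit of $\GL_\infty$ actions (scale them down), this junk-killing is automatic in the orbit closure, so the real content is producing $g$ plus junk.

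First I would analyze what high q-rank gives us structurally. If $\qrank(f) = r$ is large, then $f$ is far from lying in any ideal generated by few linear forms; equivalently (via the strength interpretation), no small set of linear forms $\ell_1,\dots,\ell_m$ makes $f$ lie in $(\ell_1,\dots,\ell_m) + (\text{quadrics})$ in the relevant sense. I expect the key lemma to be a ``pivoting'' or greedy-extraction statement: given a cubic of q-rank $\ge r$, one can find a variable $y$ and a substitution after which $f = y \cdot Q + f'$ where $Q$ is a quadric of high rank in fresh variables and $f'$ still has large q-rank in a disjoint set of variables — then iterate $d$ times. After $d$ rounds one has extracted $d$ ``independent directions'' $y_1,\dots,y_d$ together with high-rank quadrics $Q_1,\dots,Q_d$ in disjoint auxiliary variable sets. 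The point of insisting the $Q_i$ have high rank is that a high-rank quadric in enough variables specializes (via $\GL$) onto \emph{any} quadratic form, in particular onto an arbitrary quadric in $x_1,\dots,x_d$; and a sum $\sum_i y_i Q_i$, with the $y_i$ freely mappable to linear forms in $x_1,\dots,x_d$, then specializes onto an arbitrary expression $\sum_i \ell_i(x) q_i(x)$ — and every cubic $g\in P_3(\bk^d)$ is of this shape (take $\ell_i = x_i$, $q_i = \partial g/\partial x_i$ up to scalars, since $3g = \sum x_i \partial_i g$ in characteristic $\ne 3$). The auxiliary-variable parts of the $Q_i$ get zeroed out in the closure.

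The quantitative bookkeeping is where the hypotheses $r > \exp(240)$ and $d \le \tfrac13\log r$ should come from: each of the $d$ extraction steps costs us a controlled amount of q-rank (and a controlled number of auxiliary variables), so one needs a recursion of the form $r_{j+1} \ge h(r_j)$ for $d$ steps, and one needs $r_d$ to still be large enough that the final quadrics have rank at least, say, $2d$ (enough to surject onto $P_2(\bk^d)$). A logarithmic dependence $d \approx \tfrac13\log r$ is exactly what one gets if each step roughly takes a cube root, or a square root composed with a bounded loss — so I anticipate the per-step cost looks like $r \mapsto r^{1/3}$ (hence the $\tfrac13$) with the $\exp(240)$ absorbing additive constants and the requirement that the terminal rank exceed a fixed threshold.

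The main obstacle will be the extraction lemma itself: controlling how q-rank degrades when we ``peel off'' a $y\cdot Q$ term and pass to a disjoint variable block. One must show that high q-rank forces the existence of such a peeling with only mild rank loss, which likely requires relating q-rank to the rank of the ``partial derivative'' quadrics $\partial f/\partial x_i$ (or to the rank of $f$ viewed as a map $P_1(\bk^n)^\vee \to P_2(\bk^n)$), and arguing that if all these quadrics have small rank then $f$ itself has small q-rank — a converse bound. Establishing that converse, with explicit constants, and then iterating it while keeping the auxiliary variable blocks genuinely disjoint (so the pieces don't interfere), is the technical heart; the final ``specialize high-rank quadrics onto anything and zero out the rest'' step is comparatively routine.
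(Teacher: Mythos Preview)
Your high-level plan --- degenerate $f$ so that it projects onto an arbitrary cubic in $d$ variables, using that every $g\in P_3(\bk^d)$ is $\sum_{i=1}^d x_i q_i$ --- matches the paper. But the mechanism you propose differs substantially from the paper's, and your version has a real gap at its core.

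The paper does \emph{not} iterate an extraction step, and it never arranges the quadrics $Q_1,\dots,Q_d$ to live in disjoint variable blocks. Instead it proceeds in three global moves. First (Proposition~\ref{prop:srk}), it degenerates $f$ once to a \emph{separable} cubic $g=\sum_i \ell_i q_i$ with the $\ell_i$ in one set of variables and the $q_i$ in a complementary set, while retaining $\qrank(g)\ge \tfrac12\xi(r)$ with $\xi(r)\approx\sqrt{2r}$; the key input here is a bound on linear spaces in cubic hypersurfaces (Proposition~\ref{prop:qbd}). Second (Propositions~\ref{prop:maxrank} and~\ref{prop:minrank}), it shows the span $Q$ of the $q_i$ contains a $d$-dimensional subspace $Q'$ with $\minrank(Q')\ge s=d^22^d+2(d+1)d$, provided $(2^d-1)(s-1)+d\le\tfrac12\xi(r)$. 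Third, it kills the other $\ell_i$ and invokes Eggermont's theorem (Theorem~\ref{thm:deg2}): if $\minrank(Q')$ is large enough, the orbit closure of $(q_1,\dots,q_d;\ell_1,\dots,\ell_d)$ in $P_2(V)^d\times P_1(V)^d$ already surjects onto $P_2(W)^d\times P_1(W)^d$, with the $q_i$ sharing variables. This black box is exactly what lets the paper avoid your disjoint-blocks requirement.

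Your ``extraction lemma'' --- peel off $f=yQ+f'$ with $Q$ of high rank in \emph{fresh} variables and $f'$ of high q-rank in a \emph{disjoint} set, then iterate --- is a strictly stronger structural claim than anything the paper proves, and you have not indicated how to establish it. High q-rank readily gives a high-rank quadric in the span of the $q_i$ (this is Proposition~\ref{prop:maxrank} with $Q'=Q$), but forcing variable-disjointness between $Q$ and the residual $f'$, and preserving high q-rank of $f'$ after that restriction, is not at all clear; the paper's Proposition~\ref{prop:qsubsp} only controls q-rank loss under restriction by the \emph{codimension}, which would be enormous if you kill all the variables appearing in $Q$. So as written, the heart of your argument is an unproven lemma that is plausibly harder than the theorem itself.

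Finally, your quantitative heuristic is off. The $\tfrac13$ does not come from a cube-root-per-step recursion; it comes from the single inequality $(2^d-1)(d^22^d+\cdots)+d\le\tfrac12\xi(r)$, which after squaring $\xi(r)\approx\sqrt{2r}$ gives roughly $d^4\,16^d\le r$, bounded above by $20^d$ for $d\ge 80$, and then $\log 20<3$ yields $d\le\tfrac13\log r$. The $\exp(240)$ threshold is just $80\le\tfrac13\log r$.
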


The proof of this theorem is really the heart of the paper. The idea is as follows. Suppose that $f=\sum_{i=1}^m \ell_i q_i$ has large q-rank. We establish two key facts. First, after possibly degenerating $f$ (i.e., passing to a form in the orbit-closure) one can assume that the $\ell_i$'s and $q_i$'s are in separate sets of variables, while maintaining the assumption that $f$ has large q-rank. This is useful when studying the orbit closoure, as it allows us to move the $\ell$'s and $q$'s independently. Second, we show that $q$'s have large rank in a very stong sense: namely, that within the linear span of the $q$'s there is a large-dimensional subspace such that every non-zero element of it has large rank. The results of \cite{eggermont} then imply that the orbit closure of $(q_1, \ldots, q_m; \ell_1, \ldots, \ell_m)$ in $P_2(\bk^n)^m \times P_1(\bk^n)^m$ surjects onto $P_2(\bk^d)^m \times P_2(\bk^d)^m$, and this yields the theorem.

\subsection{Uniformity in commutative algebra} \label{ss:stillman}

We now explain one source of motivation for Question~\ref{mainques}. An {\bf ideal invariant} is a rule that assigns to each homogeneous ideal $I$ in each standard-graded polynomial $\bk$-algebra $A$ (in finitely many variables) a quantity $\nu_A(I) \in \bZ \cup \{\infty\}$, such that $\nu_A(I)$ only depends on the pair $(A,I)$ up to isomorphism. We say that $\nu$ is {\bf cone-stable} if $\nu_{A[x]}(I[x])=\nu_A(I)$, that is, adjoining a new variable does not affect $\nu$. The main theorem of \cite{ermansam} is (in part):

\begin{theorem}[\cite{ermansam}] \label{thm:ess}
The following are equivalent:
\begin{enumerate}
\item Let $\nu$ be a cone-stable ideal invariant that is upper semi-continuous in flat families, and let $\bd=(d_1, \ldots, d_r)$ be a tuple of non-negative integers. Then there exists an integer $B$ such that $\nu_A(I)$ is either infinite or at most $B$ whenever $I$ is an ideal generated by $r$ elements of degrees $d_1, \ldots, d_r$. (Crucially, $B$ does not depend on $A$.)
\item For every $\bd$ as above, the space
\begin{displaymath}
P_{d_1}(\bk^{\infty}) \times \cdots \times P_{d_r}(\bk^{\infty})
\end{displaymath}
is $\GL$-noetherian.
\end{enumerate}
\end{theorem}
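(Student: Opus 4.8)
The plan is to prove the two implications separately, using as a dictionary the elementary fact that a point $v$ of $P_{d_1}(\bk^N) \times \cdots \times P_{d_r}(\bk^N)$ is literally a tuple $(f_1, \ldots, f_r)$ of forms in $A_N := \bk[x_1, \ldots, x_N]$ with $\deg f_i = d_i$, hence determines the ideal $I_v := (f_1, \ldots, f_r)$. Under the standard inclusion $A_N \hookrightarrow A_{N+1}$ the same tuple determines the extended ideal, so the isomorphism class of the pair $(A_N, I_v)$ does not change along the maps in the directed system defining $V_{\bd} := P_{d_1}(\bk^{\infty}) \times \cdots \times P_{d_r}(\bk^{\infty})$ --- precisely the compatibility that cone-stability is designed to exploit. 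All constructions below will be $\GL_\infty$-equivariant.

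For $(b) \Rightarrow (a)$: assuming $V_{\bd}$ is $\GL$-noetherian and given a cone-stable, flat-upper-semicontinuous ideal invariant $\nu$, I would first transport $\nu$ to a $\GL_\infty$-invariant function $\tilde\nu$ on $V_{\bd}$ by $\tilde\nu(v) = \nu_{A_N}(I_v)$ --- well defined on the colimit by cone-stability, $\GL_\infty$-invariant by isomorphism-invariance. The key point is that each superlevel set $Z_c = \{\, v : \tilde\nu(v) \geq c \,\}$, for $c$ finite, is $\GL_\infty$-stable and Zariski closed; stability is immediate, and closedness comes from applying the flat upper semi-continuity of $\nu$ to the tautological family of ideals over each finite-dimensional approximation $V_{\bd, N}$. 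Then the descending chain $Z_1 \supseteq Z_2 \supseteq \cdots$ stabilizes by noetherianity, say $Z_{B+1} = Z_{B+2} = \cdots$; any $v$ with $\tilde\nu(v) \geq B+1$ lies in every $Z_c$ and hence has $\tilde\nu(v) = \infty$, which is exactly the assertion that $\nu_A(I)$ is either $\infty$ or at most $B$ for every ideal of the prescribed shape.

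For $(a) \Rightarrow (b)$: I would show that every descending chain $X_1 \supseteq X_2 \supseteq \cdots$ of $\GL_\infty$-stable closed subsets of $V_{\bd}$ stabilizes, by manufacturing from it a cone-stable, flat-upper-semicontinuous ideal invariant $\nu$ for the degree vector $\bd$ such that the chain stabilizes iff $\nu$ is bounded on finite values, and then invoking $(a)$. The naive choice --- letting $\nu_A(I)$ be the largest $n$ for which $I$ has a degree-$\bd$ presentation landing in $X_n$ --- fails, because a $\GL_\infty$-stable closed subset of $V_{\bd}$ need not be stable under change of generating system, so this quantity can jump to $\infty$ on an ideal whose generic presentation avoids $\bigcap_n X_n$. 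The right construction instead measures, for an ideal $I$ generated in degrees $\bd$, how special a point of its (irreducible) variety of degree-$\bd$ presentations must be in order to land in $X_n$ --- for instance through the dimension of the intersection of that presentation variety with $X_n$ inside a suitable compactification --- arranged so that $\nu$ is an honest isomorphism invariant, is unchanged by adjoining variables, and is flat-upper-semicontinuous. Feeding this into $(a)$ yields a bound $B$, and then no point of $V_{\bd}$ can lie in $X_{B+1} \setminus \bigcap_n X_n$, so the chain stabilizes at $X_{B+1}$.

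The main obstacle, common to both directions, is the passage from ``upper semi-continuous in flat families'' to ``superlevel sets are Zariski closed'', applied to the \emph{tautological} family of ideals over $V_{\bd}$, which is not flat: one must stratify each $V_{\bd, N}$ by the Hilbert function of $I_v$, apply upper semi-continuity on each now-flat stratum, and reassemble. In the $(a) \Rightarrow (b)$ direction this is compounded by the genuinely delicate task of building an ideal invariant that is simultaneously intrinsic to the ideal, cone-stable, flat-upper-semicontinuous, and yet sharp enough that a uniform bound on it forces an arbitrary descending chain of $\GL_\infty$-stable closed subsets to stabilize --- this, rather than any single estimate, is the heart of the matter.
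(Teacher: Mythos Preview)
The paper does not prove this theorem: it is quoted from \cite{ermansam} (listed in the bibliography as ``In preparation'') and is used as a black box to derive Theorem~\ref{thm:cubicinv} and its corollaries. There is therefore no proof in the present paper to compare your proposal against.

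That said, on its own terms your proposal has a genuine gap in the $(a)\Rightarrow(b)$ direction. You correctly observe that the naive invariant ``largest $n$ for which some presentation of $I$ lands in $X_n$'' is not upper semi-continuous, and you gesture toward a replacement (``dimension of the intersection of the presentation variety with $X_n$ inside a suitable compactification''), but you never actually construct such an invariant, nor verify that it is cone-stable and upper semi-continuous in flat families, nor check that a bound on it forces the chain $X_1\supseteq X_2\supseteq\cdots$ to stabilize. You yourself flag this as ``the heart of the matter,'' and indeed it is: without an explicit construction meeting all three requirements simultaneously, the argument is a wish rather than a proof. The $(b)\Rightarrow(a)$ direction is in better shape as a sketch, though the stratification-by-Hilbert-function step you allude to (needed because the tautological family is not flat) also requires care, since one must argue that the superlevel set, a priori only locally closed on each stratum, is in fact closed in the whole space.
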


\begin{remark}
Define an ideal invariant $\nu$ by taking $\nu_A(I)$ to be the projective dimension of $I$ as an $A$-module. This is cone-stable and upper semi-continuous in flat families. The boundedness in Theorem~\ref{thm:ess}(a) for this $\nu$ is exactly Stillman's conjecture, proved in \cite{stillman}.
\end{remark}

Theorem~\ref{thm:ess} shows that Question~\ref{mainques} is intimately connected to uniformity questions in commutative algbera in the style of Stillman's conjecture. The results of \cite{ermansam} are actually more precise: if part (b) holds for a single $\bd$ then part (a) holds for the corresponding $\bd$. Thus, combined with Theorem~\ref{mainthm}, we obtain:

\begin{theorem} \label{thm:cubicinv}
Let $\nu$ be a cone-stable ideal invariant that is upper semi-continuous in flat families. Then there exists an integer $B$ such that $\nu(I)$ is either infinite or at most $B$, whenever $I$ is generated by a single cubic form.
\end{theorem}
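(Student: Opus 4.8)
The plan is to deduce Theorem~\ref{thm:cubicinv} immediately from the machinery already assembled in the excerpt. First I would observe that the hypothesis ``$I$ is generated by a single cubic form $f$'' means the data is a single point $f \in P_3(\bk^n)$ for some finite $n$, and that adjoining or deleting variables does not change the isomorphism class of the pair $(A, I)$ when $A$ is a polynomial ring and $I = (f)$ — this is exactly where cone-stability of $\nu$ is used. Hence one may regard $\nu$ as a function on $P_3(\bk^\infty)$ that factors through $\GL_\infty$-orbits and, by the same cone-stability, is insensitive to the number of ambient variables; the only genuine inputs are $n$ and the $\GL_n$-orbit of $f$.

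Next I would invoke Theorem~\ref{thm:ess}, specialized to the single-tuple case $\bd = (3)$. By Theorem~\ref{mainthm}, the space $P_3(\bk^\infty)$ is $\GL_\infty$-noetherian, so part (b) of Theorem~\ref{thm:ess} holds for $\bd = (3)$; the refined form of the result from \cite{ermansam} quoted in the text (``if part (b) holds for a single $\bd$ then part (a) holds for the corresponding $\bd$'') then yields part (a) for $\bd = (3)$. That is precisely the assertion that there is a bound $B$, independent of $A$, such that $\nu_A(I)$ is infinite or at most $B$ for every ideal $I$ generated by a single form of degree $3$. Applying this to the given $\nu$ — which is assumed cone-stable and upper semi-continuous in flat families, exactly the hypotheses of Theorem~\ref{thm:ess}(a) — finishes the proof.

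In short, there is essentially nothing to do beyond correctly citing the chain Theorem~\ref{mainthm} $\Rightarrow$ Theorem~\ref{thm:ess}(b) for $\bd=(3)$ $\Rightarrow$ Theorem~\ref{thm:ess}(a) for $\bd=(3)$ $\Rightarrow$ Theorem~\ref{thm:cubicinv}. The one point that warrants a sentence of care is matching the hypothesis of Theorem~\ref{thm:cubicinv} (``$I$ generated by a single cubic'') to the shape required in Theorem~\ref{thm:ess}(a) (``$I$ generated by $r$ elements of degrees $d_1,\dots,d_r$''), with $r = 1$ and $d_1 = 3$; this is purely a matter of reading off the definitions. The main ``obstacle'' — the actual noetherianity statement Theorem~\ref{mainthm}, equivalently Theorems~\ref{mainthm2} and~\ref{mainthm3} — has by this stage already been established, so Theorem~\ref{thm:cubicinv} is a formal corollary.
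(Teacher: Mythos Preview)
Your proposal is correct and matches the paper's own derivation: the paper states Theorem~\ref{thm:cubicinv} as an immediate consequence of combining Theorem~\ref{mainthm} with the refined (single-$\bd$) form of Theorem~\ref{thm:ess} from \cite{ermansam}, exactly the chain you spell out. There is no additional argument in the paper beyond this citation, so your write-up is, if anything, slightly more detailed than what appears there.
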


The following two consequences of Theorem~\ref{thm:cubicinv} are taken from \cite{ermansam}.

\begin{corollary}
Given a positive integer $c$ there is an integer $B$ such that the following holds: if $Y \subset \bP^{n-1}$ is a cubic hypersurface containing finitely many codimension $c$ linear subspaces then it contains at most $B$ such subspaces.
\end{corollary}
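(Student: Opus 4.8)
The plan is to deduce this from Theorem~\ref{thm:cubicinv} by producing a cone-stable ideal invariant, upper semi-continuous in flat families, that is finite exactly when the given cubic hypersurface contains finitely many codimension-$c$ linear subspaces and then bounds their number from above. For an ideal $I$ in $A = \bk[x_1,\dots,x_n]$ write $Y = \mathrm{Proj}(A/I) \subset \bP^{n-1}$, and let $F_c(I)$ be the Fano scheme — a closed subscheme of the Grassmannian of $(n-1-c)$-planes of $\bP^{n-1}$ — parametrizing those planes that are contained in $Y$. Set $\nu_A(I)$ equal to the length of $F_c(I)$ if $F_c(I)$ is zero-dimensional, and to $\infty$ otherwise. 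This is visibly an ideal invariant; it is upper semi-continuous in flat families, since over a base the Fano scheme is the zero scheme of a section of a vector bundle on a Grassmannian bundle, the locus where the fibre is positive-dimensional is closed, and the length of a finite scheme is upper semi-continuous; and for $I = (f)$ with $f$ cubic, $\nu_A(I) < \infty$ precisely when $V(f)$ contains only finitely many codimension-$c$ planes, in which case $\nu_A(I)$ is at least their number. Hence Theorem~\ref{thm:cubicinv} applied to $\nu$ would give the corollary at once — \emph{provided} $\nu$ is cone-stable, which is where the real work lies.

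For cone-stability I would first establish the underlying bijection of sets. The ring $A[x]/I[x]$ has $\mathrm{Proj}$ equal to the projective cone $C(Y) \subset \bP^n$ over $Y$ with some vertex point $p$, and a codimension-$c$ plane $L \subset \bP^n$ lying in $C(Y)$ either passes through $p$ — and is then the cone over a codimension-$c$ plane of $\bP^{n-1}$ lying in $Y$ — or misses $p$, in which case it projects isomorphically onto a codimension-$(c-1)$ plane of $\bP^{n-1}$ lying in $Y$. But a positive-dimensional codimension-$(c-1)$ plane in $Y$ would contain a positive-dimensional family of codimension-$c$ planes, so when $F_c(I)$ is finite the second case cannot occur and the codimension-$c$ planes in $C(Y)$ correspond bijectively to those in $Y$; if $F_c(I)$ is infinite, so is $F_c(I[x])$. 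Iterating, when $F_c(I)$ is finite all of its planes contain the whole vertex of $Y$ and descend to the \emph{essential model} $\overline Y = V(\overline f)$, the vanishing locus of the expression of $f$ in its minimal number of variables, which is not a cone.

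The main obstacle is promoting this set-level statement to an equality of invariants, because the naive Fano length is \emph{not} cone-stable: coning can introduce spurious nilpotents at planes meeting $Y$ non-transversally — already for $f = x^2y$ one computes that the length of $F_1$ grows linearly in the number of ambient variables — whereas the bare count of planes, though cone-stable, is only lower, not upper, semi-continuous in flat families and hence useless for Theorem~\ref{thm:cubicinv}. The fix is to redefine $\nu_A((f))$ as the length of the Fano scheme of the essential model $\overline Y$ (equivalently, to weight each plane $L \subset Y$ by the local Fano multiplicity of its image in $\overline Y$ rather than in $Y$): this is manifestly unchanged by adjoining a variable, is still at least the number of codimension-$c$ planes, and is still finite exactly when that number is. The delicate point — the one I expect to be the crux — is to check that this essential Fano length remains upper semi-continuous in flat families even though the essential model $\overline f$ varies discontinuously with $f$. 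Granting it, $\nu$ satisfies all the hypotheses of Theorem~\ref{thm:cubicinv}, and the corollary follows. These verifications are carried out in \cite{ermansam}.
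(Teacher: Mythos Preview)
The paper itself gives no proof of this corollary: it simply records the statement as one of two consequences of Theorem~\ref{thm:cubicinv} ``taken from \cite{ermansam},'' and that reference is listed as \emph{in preparation}. So there is nothing in the paper to compare your argument against beyond the bare assertion that the corollary follows from Theorem~\ref{thm:cubicinv} via the machinery of \cite{ermansam}.

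Your sketch goes considerably further than the paper does. You correctly isolate the strategy (build a cone-stable, upper semi-continuous ideal invariant whose finiteness detects finiteness of the Fano scheme and whose value bounds the count, then invoke Theorem~\ref{thm:cubicinv}), and you correctly identify cone-stability as the genuine obstacle. Your observation that the naive Fano length fails cone-stability---with the $x^2y$ example exhibiting growth under coning---is exactly the sort of thing one has to confront, and your proposed repair via the essential model is a natural move. You are also honest that the residual difficulty (upper semi-continuity of the essential Fano length, given that the number of essential variables jumps) is the crux, and you defer it to \cite{ermansam} just as the paper does.

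One caveat: since \cite{ermansam} is not available, neither you nor the paper can be checked on whether the specific invariant actually used there is your essential Fano length or some other cone-stable proxy; your sketch should be read as a plausible reconstruction rather than a verified match. But as a reconstruction it is sound, and it supplies more content than the paper's own treatment.
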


\begin{corollary}
Given a positive integer $c$ there is an integer $B$ such that the following holds: if $Y \subset \bP^{n-1}$ is a cubic hypersurface whose singular locus has codimension $c$ then its singular locus has degree at most $B$.
\end{corollary}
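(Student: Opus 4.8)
The plan is to deduce this corollary from Theorem~\ref{thm:cubicinv} by exhibiting a suitable ideal invariant. Fix the integer $c$. I would define an ideal invariant $\nu$ as follows: set $\nu_A(I) = \infty$ unless $A = \bk[x_1, \ldots, x_n]$ is a polynomial ring and $I = (f)$ is generated by a single nonzero cubic form $f$; in that case let $Z \subseteq \bP^{n-1}$ be the singular locus of the hypersurface $V(f)$, equivalently the common zero locus of the partial derivatives $\partial_1 f, \ldots, \partial_n f$ (note that $f$ itself lies in this ideal, by Euler's identity, so the Jacobian criterion gives exactly this), and put $\nu_A((f)) = \deg Z$ if $Z$ has codimension exactly $c$ in $\bP^{n-1}$, and $\nu_A((f)) = \infty$ otherwise. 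Here $\deg Z$ denotes the degree of the union of the top-dimensional components of $Z_{\mathrm{red}}$. This rule depends only on the pair $(A,I)$ up to isomorphism, so it is an ideal invariant; and unwinding the definition, the constant $B$ furnished by Theorem~\ref{thm:cubicinv} for this $\nu$ is precisely the bound asserted in the corollary. Everything therefore reduces to checking that $\nu$ is cone-stable and upper semi-continuous in flat families.

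Cone-stability is a direct geometric computation. Adjoining a new variable $y$ does not change $I = (f)$, and since $\partial f / \partial y = 0$, the singular locus of $V(f) \subseteq \bP^n$ is the join of $Z \subseteq \bP^{n-1}$ with the point $[0 : \cdots : 0 : 1]$. Forming such a cone over a point raises the dimension of the subvariety and the dimension of the ambient projective space both by one, hence preserves codimension, and it preserves degree; so $\nu_{A[y]}((f)) = \nu_A((f))$, while in all remaining cases both sides equal $\infty$.

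The crux, and the step I expect to be the main obstacle, is upper semi-continuity in flat families: for a flat family of homogeneous ideals $\mathcal{I}$ over a base $S$, the locus $\{s \in S : \nu(\mathcal{I}_s) \geq m\}$ must be closed for every $m$. The locus of $s$ where $\mathcal{I}_s$ is not a principal cubic is closed (it is cut out by rank conditions on graded pieces) and $\nu = \infty$ there, so one reduces to a family of cubics $f_s$; one then forms the relative Jacobian ideal and the induced family $\mathcal{Z} \to S$ of singular loci. Upper semi-continuity of fiber dimension shows that $\{s : \operatorname{codim} Z_s \leq c\}$ is closed, so it remains to control the degree on the locally closed stratum where the codimension is exactly $c$, and to check compatibility along its boundary, where the codimension drops and $\nu$ jumps to $\infty$. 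Over a flattening stratification of the top-dimensional part of $\mathcal{Z}$ the Hilbert polynomial, hence the degree, is locally constant on strata; the point is then that a specialization keeping the codimension fixed can only increase the degree (conservation of number), while a specialization that decreases the codimension only raises $\nu$ to $\infty$. Fitting these semicontinuity statements together along the codimension stratification is the real work, and it is carried out in \cite{ermansam}; granting it, the corollary follows. Essentially the same argument proves the preceding corollary, using instead the ideal invariant that records the number of codimension-$c$ linear subspaces contained in $V(f)$ when that number is finite, and $\infty$ otherwise.
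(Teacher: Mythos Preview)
Your overall strategy is the natural one and matches what the paper defers to \cite{ermansam} for (the paper itself gives no argument). However, your specific invariant $\nu$ is \emph{not} upper semi-continuous as written, and this is a genuine gap rather than a detail absorbed by the citation. The problem is your choice to set $\nu = \infty$ when the singular locus has codimension \emph{greater} than $c$. Consider a one-parameter family of cubics whose generic member is smooth (singular locus empty, codimension $>c$, hence $\nu = \infty$) degenerating to one whose singular locus has codimension exactly $c$ (so $\nu$ is a finite degree)---for instance, with $c = n-1$, a smooth cubic in $\bP^{n-1}$ acquiring a node. Then $\nu$ drops from $\infty$ at the generic point to a finite value at the special point, violating upper semi-continuity. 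Your discussion of ``compatibility along the boundary'' only treats specializations \emph{out of} the codimension-$c$ stratum (where codimension drops below $c$ and $\nu$ jumps to $\infty$); it misses specializations \emph{into} that stratum from the open locus where codimension exceeds $c$.

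The fix is easy: set $\nu = 0$ rather than $\infty$ when the singular locus has codimension strictly greater than $c$. Equivalently, let $\nu$ record the degree of the codimension-$c$ part of the singular locus (interpreted as $0$ when that part is empty), and take $\nu = \infty$ only when some component has codimension $<c$. With this convention a smooth cubic has $\nu = 0$, so specializing to a positive finite value is harmless; your cone-stability argument goes through unchanged; and the semi-continuity sketch you give (degree can only rise within the codimension-$c$ stratum, while a drop in codimension sends $\nu$ to $\infty$) now genuinely covers all cases. The bound from Theorem~\ref{thm:cubicinv} then controls $\nu$ on the codimension-$c$ locus, which is exactly what the corollary asserts.
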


It would be interesting if these results could be proved by means of classical algebraic geometry. It would also be interesting to determine the bound $B$ for some small values of $c$.

\subsection{Twisted commutative algebras} \label{ss:tca}

In this section we put $\bk=\bC$. Our original motivation for considering Question~\ref{mainques} came from the theory of twisted commutative algebras. Recall that a {\bf twisted commutative algebra} (tca) over the complex numbers is a commutative unital associative $\bC$-algebra $A$ equipped with a polynomial action of $\GL_{\infty}$; see \cite{expos} for background. The easiest examples of tca's come by taking the symmetric algebra on a polynomial representation of $\GL_{\infty}$: for example, $\Sym(\bC^{\infty})$ or $\Sym(\Sym^2(\bC^{\infty}))$.

TCA's have appeared in several applications in recent years, for instance:
\begin{itemize}
\item Modules over the tca $\Sym(\bC^{\infty})$ are equivalent to $\mathbf{FI}$-modules, as studied in \cite{fimodule}. The structure of the module category was worked out in great detail in \cite{symc1}.
\item Finite length modules over the tca $\Sym(\Sym^2(\bC^{\infty}))$ are equivalent to algebraic representations of the infinite orthogonal group \cite{infrank}.
\item Modules over tca's generated in degree~1 were used to study $\Delta$-modules in \cite{delta}, with applications to syzygies of Segre embeddings.
\end{itemize}

A tca $A$ is {\bf noetherian} if its module category is locally noetherian; explicitly, this means that any submodule of a finitely generated $A$-module is finitely generated. A major open question in the theory, first raised in \cite{delta}, is:

\begin{question}
Is every finitely generated tca noetherian?
\end{question}

So far, our knowledge on this question is extremely limited. For tca's generated in degrees $\le 1$ (or more generally, ``bounded'' tca's), noetherianity was proved in \cite{delta}. (It was later reproved in the special case of $\mathbf{FI}$-modules in \cite{fimodule}.) For the tca's $\Sym(\Sym^2(\bC^{\infty}))$ and $\Sym(\lw^2(\bC^{\infty}))$, noetherianity was proved in \cite{deg2noeth}. No other cases are known. We remark that these known cases of noetherianity, limited though they are, have been crucial in applications.

Since noetherianity is such a difficult property to study, it is useful to consider a weaker notion. A tca $A$ is {\bf topologically noetherian} if every radical ideal is the radical of a finitely generated ideal. The results of \cite{eggermont} show that tca's generated in degrees $\le 2$ are topologically noetherian. Topological noetherianity of the tca $\Sym(\Sym^d(\bC^{\infty}))$ is equivalent to the noetherianity of the space $P_d(\bC^{\infty})$ appearing in Question~\ref{mainques}. Thus Theorem~\ref{mainthm} can be restated as:

\begin{theorem}
The tca $\Sym(\Sym^3(\bC^{\infty}))$ is topologically noetherian.
\end{theorem}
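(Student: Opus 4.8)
The plan is to deduce this final theorem directly from Theorem~\ref{mainthm} via the dictionary between twisted commutative algebras and spaces of forms. First I would recall that $\Sym(\Sym^3(\bC^\infty))$, as a polynomial functor in $\GL_\infty$-representations, has the property that its closed points (i.e.\ the $\bC$-points of $\Spec$ of its finite-variable truncations, taken in the limit) are exactly the cubic forms: concretely, for each $n$ the degree-graded piece gives the polynomial ring whose $\Spec$ is $P_3(\bk^n)$, and the inverse limit of these is $P_3(\bC^\infty)$ with its $\GL_\infty$-action. Under this identification, $\GL_\infty$-stable radical ideals of the tca correspond bijectively to $\GL_\infty$-stable Zariski-closed subsets of $P_3(\bC^\infty)$, and a radical ideal is the radical of a finitely generated ideal precisely when the corresponding closed subset is cut out by finitely many $\GL_\infty$-orbits of equations.

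Next I would invoke Theorem~\ref{mainthm}: every $\GL_\infty$-stable Zariski-closed subset of $P_3(\bC^\infty)$ is defined by finitely many orbits of equations. Translating back through the dictionary of the previous paragraph, this says exactly that every $\GL_\infty$-stable radical ideal of $\Sym(\Sym^3(\bC^\infty))$ is the radical of an ideal generated by finitely many $\GL_\infty$-orbits of elements, i.e.\ of a finitely generated ideal. That is the definition of topological noetherianity for the tca, so the theorem follows.

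The only genuine content beyond citation is making the correspondence in the first paragraph precise, and this is where I expect the main (minor) obstacle to lie: one must check that ``radical ideal'' in the tca sense matches ``Zariski-closed subset'' in the inverse-limit sense, that the $\GL_\infty$-action is compatible on both sides, and that finite generation up to the group action on one side matches finite generation up to the group action on the other. All of this is standard and is spelled out in \cite{expos} and used implicitly throughout \cite{eggermont, deg2noeth}; I would simply cite those sources for the equivalence rather than reprove it. With that equivalence in hand the proof is a one-line translation of Theorem~\ref{mainthm}, which is why this statement is presented as a restatement rather than a new theorem.
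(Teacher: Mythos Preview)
Your proposal is correct and matches the paper's approach exactly: the paper presents this theorem as a direct restatement of Theorem~\ref{mainthm}, using precisely the equivalence between topological noetherianity of $\Sym(\Sym^d(\bC^{\infty}))$ and $\GL_\infty$-noetherianity of $P_d(\bC^{\infty})$ that you spell out. No separate proof is given in the paper beyond asserting this equivalence, so your one-line translation via the standard dictionary is all that is needed.
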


This is the first noetherianity result for an unbounded tca generated in degrees $\ge 3$.

\subsection{A result for tensors}

Using similar methods, we can prove the following result:

\begin{theorem}
The space $P_1(\bk^{\infty}) \,\wh{\otimes}\, P_1(\bk^{\infty}) \,\wh{\otimes}\, P_1(\bk^{\infty})$ is noetherian with respect to the action of the group $\GL_{\infty} \times \GL_{\infty} \times \GL_{\infty}$, where $\wh{\otimes}$ denotes the completed tensor product.
\end{theorem}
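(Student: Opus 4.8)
The plan is to mimic the entire strategy used for $\Sym(\Sym^3(\bk^\infty))$, with the symmetric ``q-rank'' replaced by Tao's slice rank for $3$-tensors. Given a tensor $T \in P_1(\bk^\infty) \wh\otimes P_1(\bk^\infty) \wh\otimes P_1(\bk^\infty)$, define its slice rank to be the minimal $r$ such that $T = \sum_{i=1}^r v_i \otimes M_i$, where each term is a vector in one of the three factors tensored with a $2$-tensor in the remaining two factors (allowing the ``sliced'' direction to vary among the three with $i$). The locus of tensors of slice rank $\le r$ is the image of a morphism from a product of spaces of $2$-tensors and vectors, each of which is $\GL_\infty$-noetherian (or $\GL_\infty \times \GL_\infty$-noetherian) by \cite{eggermont}, hence by the standard pushforward argument (\cite[\S 3]{draisma}) the slice-rank-$\le r$ locus is $(\GL_\infty)^3$-noetherian. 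So, exactly as in the reduction to Theorem~\ref{mainthm2}, it remains to show that any $(\GL_\infty)^3$-stable subset containing tensors of arbitrarily large slice rank is Zariski dense; equivalently, that a tensor of large enough slice rank has orbit closure surjecting onto $P_1(\bk^d)^{\otimes 3}$ for $d$ up to roughly $\log$ of the slice rank.

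For the quantitative core, I would transplant the proof of Theorem~\ref{mainthm3}. Write $T = \sum_{i=1}^m v_i \otimes M_i$ (say all slices taken in the first factor for concreteness — the mixed case costs at most a factor of $3$). The first step is a separation-of-variables degeneration: after passing to the orbit closure one can assume the $v_i$ live in a block of first-factor variables disjoint from the variables appearing in the $M_i$, while the slice rank stays large; this is the tensor analogue of putting the $\ell_i$'s and $q_i$'s in separate variables, and it should go through with only notational changes since the slicing already builds in the product structure. The second step is to extract from the span of the $M_i$ a large-dimensional subspace all of whose nonzero elements have large ordinary matrix rank; this is where the slice-rank lower bound on $T$ gets converted into a rank lower bound on the matrix part, paralleling the passage from q-rank to the ``strong rank'' statement about the $q_i$'s. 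Granting these two facts, \cite{eggermont} gives that the orbit closure of the tuple $(M_1, \ldots, M_m; v_1, \ldots, v_m)$ in $(P_1 \wh\otimes P_1)^m \times P_1^m$ surjects onto the corresponding space over $\bk^d$, and reassembling $\sum v_i \otimes M_i$ produces every tensor in $P_1(\bk^d)^{\otimes 3}$, just as in the symmetric case.

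The main obstacle I anticipate is not the structural pushforward argument — that is formal — but rather re-deriving the two key facts (separation of variables, and the existence of a large high-rank subspace inside the span of the slices) in a form strong enough to feed into \cite{eggermont}. In the symmetric setting these rely on fairly delicate combinatorial/linear-algebra estimates about cubic forms and their partial polarizations; the tensor versions should be \emph{easier} because there is no symmetrization to fight against and matrix rank is better behaved than the rank of a symmetric tensor, but one still has to track the three roles a slice can play and make sure the bounds do not degrade when passing between the three factors. A secondary point of care is that here the group is $\GL_\infty \times \GL_\infty \times \GL_\infty$ rather than a single $\GL_\infty$, so every invocation of ``the orbit closure'' and every application of the noetherianity results from \cite{eggermont} must be with respect to the correct product group acting on the correct product space; this is routine but must be done consistently. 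Once those ingredients are in place, the conclusion follows by exactly the same endgame as Theorem~\ref{mainthm}.
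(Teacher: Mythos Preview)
The paper does not actually prove this theorem: it is stated in the introduction with the remark ``Using similar methods, we can prove the following result \ldots\ We plan to write a short note containing the proof.'' So there is no proof in the paper to compare against, only the hint that the argument parallels the cubic case.

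Your proposal is exactly what ``similar methods'' should mean here: replace q-rank by slice rank, reduce to showing that unbounded slice rank forces density, and run the same two-step core (separate the vector part from the matrix part, then find a large subspace of high-rank matrices inside the span of the slices) feeding into \cite{eggermont}. This is consistent with the paper's claim. One remark: the separation step (the analogue of Proposition~\ref{prop:srk}) is where the tensor case genuinely simplifies. In the symmetric setting the $\ell_i$'s and $q_i$'s share variables, and disentangling them costs the $\xi(r)$ factor and a delicate degeneration. In the tensor setting the slice vector $v_i$ lives in one factor and the matrix $M_i$ in the other two, and the product group $\GL_\infty \times \GL_\infty \times \GL_\infty$ already acts independently on these, so no separation degeneration is needed at all once you are working with slices in a fixed direction. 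The only residual issue is that slice rank allows slices in all three directions simultaneously; your ``costs at most a factor of $3$'' pigeonhole is not literally correct as stated (slice rank $r$ does not imply a rank-$r/3$ expression using slices in a single direction), but a short degeneration argument using independent scalings on the three factors lets you pass to the orbit closure and isolate the slices of a single type while keeping the slice rank comparably large, after which the rest of your outline goes through.
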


We plan to write a short note containing the proof.

\subsection{Outline of paper}

In \S \ref{sec:qrank} we establish a number of basic facts about q-rank. In \S \ref{sec:proof} we use these facts to prove the main theorem. Finally, in \S \ref{sec:example}, we compute the q-rank of the cubic in Example~\ref{ex1}. This example is not used in the proof of the main theorem, but we thought it worthwhile to include one non-trivial computation of our fundamental invariant.

\subsection{Notation and terminology} \label{ss:note}

Throughout we let $\bk$ be an algebraically closed field of characteristic $\ne 2,3$. The symbols $E$, $V$, and $W$ will always denote $\bk$-vector spaces, perhaps infinite dimensional. We write $P_d(V)=\Sym^d(V)^*$ for the space of degree $d$ polynomials on $V$ equipped with the Zariski topology. Precisely, we identify $P_d(V)$ with the spectrum of the ring $\Sym(\Sym^d(V))$. When $V$ is infinite dimensional the elements of $P_d(V)$ are certain infinite series and the functions on $P_d(V)$ are polynomials in coefficients. Whenever we speak of the orbit of an element of $P_d(V)$, we mean its $\GL(V)$ orbit.

\subsection*{Acknowledgements}

We thank Bhargav Bhatt, Jan Draisma, Daniel Erman, Mircea Mustata, and Steven Sam for helpful discussions.

\section{Basic properties of q-rank} \label{sec:qrank}

In this section, we establish a number of basic facts about q-rank. Throughout $V$ will denote a vector space and $f$ a cubic in $P_3(V)$. Initially we allow $V$ to be infinite dimensional, but following Proposition~\ref{prop:inf} it will be finite dimensional (though this is often not necessary).

Our first result is immediate, but worthwhile to write out explicitly.

\begin{proposition}[Subadditivity] \label{prop:subadd}
Suppose $f,g \in P_3(V)$. Then
\begin{displaymath}
\qrank(f+g) \le \qrank(f)+\qrank(g)
\end{displaymath}
\end{proposition}

We defined q-rank from an algebraic point of view (number of terms in a certain sum). We now give a geometric characterization of q-rank that can, at times, be more useful.

\begin{proposition} \label{prop:geom}
We have $\qrank(f) \le r$ if and only if there exists a linear subspace $W$ of $V$ of codimension at most $r$ such that $f \vert_W = 0$.
\end{proposition}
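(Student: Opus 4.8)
The plan is to prove both implications directly, using the algebraic definition of q-rank to move back and forth with linear forms.

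First, suppose $\qrank(f) \le r$, so we may write $f = \sum_{i=1}^r \ell_i q_i$ with $\ell_i \in P_1(V)$ and $q_i \in P_2(V)$. Let $W$ be the common zero locus of the linear forms $\ell_1, \ldots, \ell_r$; that is, $W = \bigcap_{i=1}^r \ker(\ell_i)$, viewing each $\ell_i$ as a functional on $V$. Then $W$ is a linear subspace of codimension at most $r$ (it is the kernel of the map $V \to \bk^r$ with components $\ell_i$), and since each $\ell_i$ vanishes on $W$, so does $f$. This direction is essentially immediate; the only mild subtlety is bookkeeping with the convention $P_1(V) = V^*$ when $V$ is infinite dimensional, but codimension is still controlled by the number of functionals.

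For the converse, suppose $W \subseteq V$ has codimension $s \le r$ and $f|_W = 0$. Choose linear forms $\ell_1, \ldots, \ell_s \in P_1(V)$ whose common kernel is exactly $W$ (possible since $V/W$ is $s$-dimensional: lift a basis of $(V/W)^*$). Now I want to argue that $f$ lies in the ideal generated by $\ell_1, \ldots, \ell_s$, which in degree $3$ means $f = \sum_{i=1}^s \ell_i q_i$ for suitable quadrics $q_i$, giving $\qrank(f) \le s \le r$. The cleanest way: extend $\ell_1, \ldots, \ell_s$ to a (possibly infinite) system of linear coordinates on $V$, say with the remaining coordinates $\{y_j\}$ forming coordinates on $W$; then $\Sym(V^*)$ is a polynomial ring in the $\ell_i$ and the $y_j$, and "restriction to $W$" is the quotient map killing the $\ell_i$. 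A polynomial $f$ of degree $3$ that maps to $0$ in $\bk[y_j]$ must have every monomial divisible by some $\ell_i$, and collecting terms gives the desired expression $f = \sum_{i=1}^s \ell_i q_i$.

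The main thing to be careful about, and the only place the argument is not completely formal, is the infinite-dimensional setting: $V$ may be infinite dimensional, $W$ of finite codimension, and elements of $P_3(V)$ are infinite series. One must check that choosing a splitting $V^* $-coordinates so that $\ell_1, \ldots, \ell_s$ are part of a coordinate system is legitimate, and that the "collect the terms divisible by $\ell_i$" step produces honest elements $q_i \in P_2(V) = \Sym^2(V)^*$ rather than something ill-defined. Working with the description of $P_d(V)$ as $\Spec \Sym(\Sym^d(V))$, the restriction map $P_3(V) \to P_3(W)$ is dual to the surjection $\Sym^3(W) \hookrightarrow \Sym^3(V)$ is not quite right, so instead I would phrase it as: the inclusion $W \hookrightarrow V$ induces $\Sym^3(W) \to \Sym^3(V)$, and $f \in \Sym^3(V)^*$ restricts to $0$ on $W$ iff $f$ annihilates the image, iff $f$ factors through the quotient $\Sym^3(V)^* \to (\Sym^3(V)/\Sym^3(W)\text{-image})^*$; choosing a complement $U$ to $W$ in $V$ with $\dim U = s$, one has $\Sym^3(V) = \Sym^3(W) \oplus (\text{terms involving } U)$, and the "terms involving $U$" piece is spanned by products that each involve at least one factor from $U$. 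Dualizing and picking a basis of $U^*$ to be the $\ell_i$ then yields $f = \sum_{i=1}^s \ell_i q_i$. Once this identification is set up correctly the result drops out; I expect the only real work is stating the duality carefully enough that it is valid for infinite-dimensional $V$.
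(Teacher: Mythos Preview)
Your proposal is correct and follows essentially the same approach as the paper: for the forward direction you both take $W=\bigcap_i \ker(\ell_i)$, and for the converse you both choose coordinates adapted to $W$ and observe that every monomial of $f$ must involve one of the distinguished linear forms. The paper simply picks a basis $v_{r+1},v_{r+2},\dots$ of $W$, completes it to a basis of $V$, and works with the dual coordinates directly, dispatching the infinite-dimensional case without the extended duality discussion you include; your worries there are not misplaced, but the paper's brisk ``choose a basis and split $f=g+h$'' version already handles it.
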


\begin{proof}
First suppose $\qrank(f) \le r$, and write $f=\sum_{i=1}^r \ell_i q_i$. Then we can take $W = \bigcap_{i=1}^r \ker(\ell_i)$. This clearly has the requisite properties.

Now suppose $W$ of codimension $r$ is given. Let $v_{r+1}, v_{r+2}, \ldots$ be a basis for $W$, and complete it to a basis of $V$ be adding vectors $v_1, \ldots v_r$. Let $x_i \in P_1(V)$ be dual to $v_i$. We can then write $f=g+h$, where every term in $g$ uses one of the variables $x_1, \ldots, x_r$, and these variables do not appear in $h$. Since $f \vert_W=0$ by assumption and $g \vert_W=0$ by its definition, we find $h \vert_W=0$. But $h$ only uses the variables $x_{r+1}, x_{r+2}, \ldots$, and these are coordinates on $W$, so we must have $h=0$. Thus every term of $f$ has one of the variables $\{x_1, \ldots, x_r\}$ in it, and so we can write $f=\sum_{i=1}^r x_i q_i$ for appropriate $q_i \in P_2(V)$, which shows $\qrank(f) \le r$.
\end{proof}

\begin{remark}
In the above proposition, $f \vert_W=0$ means that the image of $f$ in $P_3(W)$ is~0. It is equivalent to ask $f(w)=0$ for all $w \in W$.
\end{remark}

The next result shows that one does not lose too much q-rank when passing to subspaces.

\begin{proposition} \label{prop:qsubsp}
Suppose $W \subset V$ has codimension $d$. Then for $f \in P_3(V)$ we have
\begin{displaymath}
\qrank(f)-d \le \qrank(f \vert_W) \le \qrank(f).
\end{displaymath}
\end{proposition}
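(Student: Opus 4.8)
The plan is to prove the two inequalities separately, using the geometric characterization of q-rank from Proposition~\ref{prop:geom} for one direction and the algebraic definition for the other. For the upper bound $\qrank(f\vert_W) \le \qrank(f)$, I would write $f = \sum_{i=1}^r \ell_i q_i$ with $r = \qrank(f)$, and simply restrict: $f\vert_W = \sum_{i=1}^r (\ell_i\vert_W)(q_i\vert_W)$, exhibiting $f\vert_W$ as a sum of $r$ products of a linear form and a quadric on $W$. Hence $\qrank(f\vert_W) \le r$. (Alternatively, one can invoke Proposition~\ref{prop:geom}: if $U \subseteq V$ has codimension $\le r$ in $V$ with $f\vert_U = 0$, then $U \cap W$ has codimension $\le r$ in $W$ and $f\vert_{U\cap W} = 0$.)

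For the lower bound $\qrank(f) - d \le \qrank(f\vert_W)$, I would use the geometric characterization. Suppose $\qrank(f\vert_W) = s$. By Proposition~\ref{prop:geom} applied inside $W$, there is a subspace $U \subseteq W$ of codimension $\le s$ in $W$ with $f\vert_U = 0$. Now $U$ has codimension $\le s + d$ in $V$, since $\operatorname{codim}_V(U) = \operatorname{codim}_V(W) + \operatorname{codim}_W(U) \le d + s$. Applying Proposition~\ref{prop:geom} again, this time inside $V$, the existence of such a $U$ with $f\vert_U = 0$ gives $\qrank(f) \le s + d = \qrank(f\vert_W) + d$, which is the desired inequality.

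The only mild subtlety, and the one point worth stating carefully, is the behavior of codimension with respect to a chain $U \subseteq W \subseteq V$ when the spaces may be infinite-dimensional: one needs $\operatorname{codim}_V(U) \le \operatorname{codim}_V(W) + \operatorname{codim}_W(U)$. This holds because a complement to $U$ in $V$ can be built from a complement to $W$ in $V$ together with a lift of a complement to $U$ in $W$, and both have finite dimension ($d$ and $s$ respectively) in the case of interest, so no infinite-cardinality issues arise. Given that, both inequalities are immediate, so there is really no serious obstacle here — the proposition is a direct consequence of Proposition~\ref{prop:geom} together with this additivity of codimension in a flag.
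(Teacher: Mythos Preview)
Your proof is correct and follows essentially the same route as the paper's: the upper bound by restricting an expression $f=\sum \ell_i q_i$, and the lower bound via Proposition~\ref{prop:geom} by taking a subspace $U\subseteq W$ on which $f$ vanishes and observing that its codimension in $V$ is at most $d+\qrank(f\vert_W)$. The paper does not spell out the codimension-additivity remark in the infinite-dimensional setting, but otherwise the arguments coincide.
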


\begin{proof}
If $f=\sum_{i=1}^r \ell_i q_i$ then we obtain a similar expression for $f \vert_W$, which shows that $\qrank(f \vert_W) \le \qrank(f)$. Suppose now that $\qrank(f \vert_W)=r$, and let $W' \subset W$ be a codimension $r$ subspace such that $f \vert_{W'}=0$ (Proposition~\ref{prop:geom}). Then $W'$ has codimension $r+d$ in $V$, and so $\qrank(f) \le r+d$ (Proposition~\ref{prop:geom} again).
\end{proof}

Our next result shows that if $V$ is infinite dimensional then the q-rank of $f \in P_3(V)$ can be approximated by the q-rank of $f \vert_W$ for a large finite dimensional subspace $W$ of $V$. This will be used at a key juncture to move from an infinite dimensional space down to a finite dimensional one.

\begin{proposition} \label{prop:inf}
Suppose $V=\bigcup_{i \in I} V_i$ (directed union). Then $\qrank(f) = \sup_{i \in I} \qrank(f \vert_{V_i})$.
\end{proposition}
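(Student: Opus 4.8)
The plan is to prove the two inequalities $\qrank(f) \ge \sup_i \qrank(f\vert_{V_i})$ and $\qrank(f) \le \sup_i \qrank(f\vert_{V_i})$ separately. The first is immediate from Proposition~\ref{prop:qsubsp} (the inequality $\qrank(f\vert_W) \le \qrank(f)$), applied with $W = V_i$: each $\qrank(f\vert_{V_i})$ is at most $\qrank(f)$, hence so is the supremum. So the real content is the reverse inequality, and here I would split into two cases according to whether the supremum is finite or infinite.

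Suppose first that $s := \sup_i \qrank(f\vert_{V_i})$ is finite. I want to show $\qrank(f) \le s$. Using Proposition~\ref{prop:geom}, it suffices to produce a subspace $W \subseteq V$ of codimension at most $s$ with $f\vert_W = 0$. The idea is that for each $i$ there is a subspace $W_i \subseteq V_i$ of codimension $\le s$ in $V_i$ with $f\vert_{W_i} = 0$, but these need not be compatible across the directed system, so one must extract a limiting subspace. I would instead argue contrapositively: if $\qrank(f) > s$, then by Proposition~\ref{prop:geom} there is no codimension-$s$ subspace of $V$ on which $f$ vanishes, which (via the same proposition in the finite-variable setting, or directly) should force the existence of an index $i$ with $\qrank(f\vert_{V_i}) > s$, contradicting the definition of $s$. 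Concretely, pick variables $x_1, \dots, x_{s+1}$ that are ``q-rank witnesses'' — e.g. by the argument in the proof of Proposition~\ref{prop:geom}, if $\qrank(f) \ge s+1$ one can find linear forms and quadrics exhibiting this, involving only finitely many coordinates; choosing $V_i$ large enough to contain all of them, one gets $\qrank(f\vert_{V_i}) \ge s+1$. The key point making this work is that a single expression $f = \sum_{j=1}^{r} \ell_j q_j$ realizing $\qrank(f) = r$, or more precisely an obstruction to any shorter expression, involves only finitely much data, and a directed union is eventually large enough to capture it.

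For the case $s = \infty$: here I must show $\qrank(f) = \infty$ as well. This is the contrapositive of the finite case — if $\qrank(f) = r < \infty$ then $f = \sum_{j=1}^r \ell_j q_j$, and restricting to any $V_i$ gives $\qrank(f\vert_{V_i}) \le r$, so $s \le r < \infty$. Thus $s = \infty$ forces $\qrank(f) = \infty$, and conversely the finite case above plus this observation gives equality in all cases. In fact the cleanest organization is: always have $\sup_i \qrank(f\vert_{V_i}) \le \qrank(f)$ from Proposition~\ref{prop:qsubsp}; and for the reverse, if $\qrank(f) = r$ is finite, fix an expression $f = \sum_{j=1}^r \ell_j q_j$ and note that since the directed union exhausts $V$, there is an index $i$ with $V_i$ containing enough coordinates that $f\vert_{V_i}$ still requires $r$ terms — more carefully, one shows $\qrank(f\vert_{V_i}) \ge \qrank(f) - \mathrm{codim}_V(V_i)$ is not quite enough since the codimension is infinite, so instead one observes that any relation $f\vert_{V_i} = \sum_{j=1}^{r'} \ell'_j q'_j$ with $r' < r$ involves finitely many coordinates and could be transported back, and taking $i$ large enough to also contain a ``certificate'' that no such short relation exists for $f$ yields $\qrank(f\vert_{V_i}) \ge r$.

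The main obstacle is making precise the step ``an obstruction to a short expression for $f$ involves only finitely much data.'' The subtlety is that $V$ is infinite dimensional and $V \setminus V_i$ has infinite codimension, so Proposition~\ref{prop:qsubsp} gives no control; one genuinely needs a finiteness/compactness input. I expect the right tool is Proposition~\ref{prop:geom} together with the observation that $\qrank(f) \le r$ is detected by the (Zariski-closed, $\GL$-stable) locus $P_3(V)_{\le r}$, which in finitely many variables is cut out by finitely many equations (the $(r+1) \times (r+1)$ minors of an appropriate ``catalecticant-type'' matrix, or the defining equations of the image of the multiplication map); a single such equation not vanishing on $f$ involves only finitely many coordinates, hence is already nonzero on $f\vert_{V_i}$ for $i$ large, giving $\qrank(f\vert_{V_i}) > r$. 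Spelling out which equations these are, or alternatively giving the direct basis-chasing argument in the style of Proposition~\ref{prop:geom}'s proof, is the one place where care is needed; everything else is formal.
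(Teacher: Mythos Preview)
Your easy inequality is fine, and you have correctly identified that the hard direction needs a genuine compactness input. However, your proposed way of supplying that input does not work.

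The core difficulty is this: the statement ``$\qrank(f) > s$'' is a \emph{universal} statement---for every codimension-$s$ subspace $W \subset V$, we have $f\vert_W \ne 0$---and is not witnessed by any finite amount of data in the direct way you suggest. Your line ``if $\qrank(f) \ge s+1$ one can find linear forms and quadrics exhibiting this'' is simply false: high q-rank is the \emph{non}-existence of a short expression, so there are no $\ell$'s and $q$'s to point to.

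Your fallback is to use defining equations for the locus $P_3(V)_{\le s}$: find a polynomial $g$ in the coefficients that vanishes on this locus but not on $f$, then restrict to a large enough $V_i$. The problem is that this is circular. In the infinite-dimensional space $P_3(V)$, the natural candidate for the equations cutting out $P_3(V)_{\le s}$ is the union over $i$ of the pullbacks of the ideals of $P_3(V_i)_{\le s}$; but the common zero locus of those equations is exactly $\{f : \qrank(f\vert_{V_i}) \le s \text{ for all } i\}$, and asserting that this equals $P_3(V)_{\le s}$ is precisely the proposition you are trying to prove. (Incidentally, catalecticant minors detect border Waring rank, not q-rank, so that specific suggestion would not give the right locus even in finite dimensions.)

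The paper supplies the needed compactness differently. Assuming the $V_i$ are finite dimensional and $\qrank(f\vert_{V_i})$ stabilizes at $r$, one considers for each $i$ the closed subvariety $Z_i \subset \Gr_r(V_i)$ of codimension-$r$ subspaces on which $f$ vanishes; these are nonempty by Proposition~\ref{prop:geom}. One checks that for $i \le j$ the map $E \mapsto E \cap V_i$ gives a morphism $Z_j \to Z_i$, so the $Z_i$ form an inverse system of nonempty \emph{proper} varieties. A separate lemma (using properness---essentially the compactness you were looking for) shows $\varprojlim Z_i(\bk) \ne \varnothing$; a point of this limit is a compatible family of subspaces $E_i \subset V_i$ whose union is a codimension-$r$ subspace of $V$ on which $f$ vanishes, giving $\qrank(f) \le r$. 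The general case then reduces to the finite-dimensional one.
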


We first give two lemmas. In what follows, for a finite dimensional vector space $W$ we write $\Gr_r(W)$ for the Grassmannian of codimension $r$ subspaces of $W$. For a $\bk$-point $x$ of $\Gr_r(W)$, we write $E_x$ for the corresponding subspace of $W$. By ``variety'' we mean a reduced scheme of finite type over $\bk$.

\begin{lemma} \label{lem:grmap}
Let $W \subset V$ be finite dimensional vector spaces, and let $Z \subset \Gr_r(V)$ be a closed subvariety. Suppose that for every $\bk$-point $z$ of $Z$ the space $E_z \cap W$ has codimension $r$ in $W$. Then there is a unique map of varieties $Z \to \Gr_r(W)$ that on $\bk$-points is given by the formula $E \mapsto E \cap W$.
\end{lemma}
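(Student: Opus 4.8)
The plan is to construct the morphism $Z \to \Gr_r(W)$ via the universal property of the Grassmannian as a moduli space of locally free quotients, rather than by gluing together explicit affine charts. Recall that on $\Gr_r(V)$ there is a tautological short exact sequence $0 \to \mathcal{S} \to V \otimes \mathcal{O} \to \mathcal{Q} \to 0$ of locally free sheaves, where $\mathcal{Q}$ has rank $r$ and the fiber of $\mathcal{S}$ at a $\bk$-point $z$ is the subspace $E_z \subset V$; moreover, for any scheme $T$, a morphism $T \to \Gr_r(V)$ is the same datum as a locally free rank-$r$ quotient of $V \otimes \mathcal{O}_T$, and likewise for $\Gr_r(W)$.

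First I would restrict the tautological sequence to $Z$ and form the composite $\phi \colon W \otimes \mathcal{O}_Z \to V \otimes \mathcal{O}_Z \twoheadrightarrow \mathcal{Q}\vert_Z$ induced by the inclusion $W \hookrightarrow V$. For a $\bk$-point $z$ of $Z$, the fiber $\phi_z \colon W \to \mathcal{Q}_z \cong V/E_z$ has kernel $W \cap E_z$, which by hypothesis has dimension $\dim W - r$; since $\dim \mathcal{Q}_z = r$, this forces $\phi_z$ to be surjective. As $\mathcal{Q}\vert_Z$ is coherent and $\phi$ is surjective on every fiber, Nakayama's lemma gives $\operatorname{coker}(\phi)=0$, i.e.\ $\phi$ is surjective.

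Thus $\phi$ exhibits $\mathcal{Q}\vert_Z$ as a locally free rank-$r$ quotient of $W \otimes \mathcal{O}_Z$, and the universal property of $\Gr_r(W)$ yields a unique morphism $g \colon Z \to \Gr_r(W)$ with $g^{*}$ of the tautological quotient equal to $\phi$. On $\bk$-points, $g(z)$ is the subspace $\ker\bigl(W \twoheadrightarrow \mathcal{Q}_z\bigr) = W \cap E_z$, so $g$ is given on points by $E \mapsto E \cap W$, as desired. Uniqueness of $g$ among maps of varieties inducing this assignment on $\bk$-points follows because $Z$ is reduced and $\bk$ is algebraically closed, so the $\bk$-points of $Z$ are dense, while $\Gr_r(W)$ is separated.

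I do not anticipate any real obstacle: the argument is entirely formal. The one place that needs a little care is the promotion of fiberwise surjectivity of $\phi$ to surjectivity as a map of sheaves (the Nakayama step), together with keeping straight the duality between codimension-$r$ subspaces and rank-$r$ quotients throughout. A more hands-on alternative would be to cover $Z$ by affine opens on which the tautological subspace admits a basis varying algebraically with $z$, extend such a basis to one of $V$, and verify that in a suitable chart of $\Gr_r(W)$ the subspace $E \cap W$ has coordinates given by regular functions; this works but is notationally heavier and less transparent, so I would favor the moduli-theoretic route above.
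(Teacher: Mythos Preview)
Your argument is correct. It differs from the paper's proof in presentation rather than in essence, but the packaging is genuinely different. The paper realizes $\Gr_r(V)$ as the quotient of $\Surj(V,\bk^r)$ by $\GL_r$, pulls $Z$ back to a $\GL_r$-stable $\wt{Z}\subset\Surj(V,\bk^r)$, observes that the restriction map $\Hom(V,\bk^r)\to\Hom(W,\bk^r)$ carries every closed point of $\wt{Z}$ into the open set $\Surj(W,\bk^r)$, and then descends the resulting $\GL_r$-equivariant morphism $\wt{Z}\to\Surj(W,\bk^r)$ to $Z\to\Gr_r(W)$. You instead stay on the base and invoke the functor-of-points description of the Grassmannian: you produce a rank-$r$ locally free quotient of $W\otimes\cO_Z$ by composing $W\hookrightarrow V$ with the tautological quotient and checking surjectivity via Nakayama. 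The two arguments verify the same fiberwise surjectivity condition; the paper phrases it as ``the restricted map lands in $\Surj$'' while you phrase it as ``$\phi$ is surjective on fibers, hence surjective.'' Your route avoids passing to the frame bundle and handles uniqueness cleanly via reducedness and separatedness; the paper's route is slightly more elementary in that it never names the universal property or Nakayama, working instead with an explicit open condition. Either is perfectly adequate here.
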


\begin{proof}
Let $\Hom(V, \bk^r)$ be the scheme of all linear maps $V \to \bk^r$, and let $\Surj(V, \bk^r)$ be the open subscheme of surjective linear maps. We identify $\Gr_r(V)$ with the quotient of $\Surj(V, \bk^r)$ by the group $\GL_r$. The quotient map $\Surj(V, \bk^r) \to \Gr_r(V)$ sends a surjection to its kernel. Let $\wt{Z} \subset \Surj(V, \bk^r)$ be the inverse image of $Z$. There is a natural map $\Hom(V, \bk^r) \to \Hom(W, \bk^r)$ given by restricting. By assumption, every closed point of $\wt{Z}$ maps into $\Surj(W, \bk^r)$ under this map. Since $\Surj(W, \bk^r)$ is open, it follows that the map $\wt{Z} \to \Hom(W, \bk^r)$ factors through a unique map of schemes $\wt{Z} \to \Surj(W, \bk^r)$. Since this map is $\GL_r$-equivariant, it descends to the desired map $Z \to \Gr_r(W)$. If $z$ is a $\bk$-point of $Z$ then it lifts to a $\bk$-point $\wt{z}$ of $\wt{Z}$, and the corresponding map $\varphi \colon V \to \bk^r$ has $\ker(\varphi)=E_z$. The image of $z$ in $\Gr_r(W)$ is $\ker(\varphi \vert_W)=E_z \cap W$, which establishes the stated formula for our map.
\end{proof}

\begin{lemma} \label{lem:invsys}
Let $\{Z_i\}_{i \in I}$ be an inverse system of non-empty proper varieties over $\bk$. Then $\varprojlim Z_i(\bk)$ is non-empty.
\end{lemma}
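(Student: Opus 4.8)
The plan is to prove Lemma~\ref{lem:invsys} by a Zorn's lemma argument applied to the partially ordered set of ``sub-systems'' of $\{Z_i\}_{i\in I}$. The two facts that make this work are: since each $Z_i$ is proper (in particular separated over $\bk$), the image of every transition map $\phi_{ij}\colon Z_j\to Z_i$ is a \emph{closed} subvariety of $Z_i$; and since each $Z_i$ is noetherian, its poset of closed subsets satisfies the descending chain condition, so any directed-downward (e.g.\ totally ordered) family of non-empty closed subsets of $Z_i$ has a least element, which is its intersection.

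Concretely, let $\mathcal{S}$ be the set of all rules $i\mapsto C_i$ assigning to each $i$ a non-empty closed subvariety $C_i\subseteq Z_i$ with $\phi_{ij}(C_j)\subseteq C_i$ for all $i\le j$, partially ordered by pointwise inclusion; it contains $(Z_i)_i$, so it is non-empty. A chain in $\mathcal{S}$ has a lower bound given by its pointwise intersection: for each $i$ this is the least element of a chain of non-empty closed subsets of $Z_i$, hence non-empty and closed, and the inclusions $\phi_{ij}(C_j)\subseteq C_i$ are inherited. By Zorn's lemma $\mathcal{S}$ has a minimal element $(C_i)_i$.

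I would then prove that every $C_i$ consists of a single $\bk$-point; the resulting family $(c_i)_i$ (with $C_i=\{c_i\}$) automatically satisfies $\phi_{ij}(c_j)=c_i$ and so lies in $\varprojlim Z_i(\bk)$, finishing the proof. This has two parts. First, the transition maps of the minimal system are surjective: if $\phi_{i_1 j_1}(C_{j_1})\subsetneq C_{i_1}$ for some $i_1\le j_1$, then replacing $C_{i_1}$ by the closed subvariety $\phi_{i_1 j_1}(C_{j_1})$ and $C_k$ by $C_k\cap\phi_{i_1 k}^{-1}\big(\phi_{i_1 j_1}(C_{j_1})\big)$ for $k\ge i_1$ (leaving $C_k$ unchanged otherwise) produces a strictly smaller element of $\mathcal{S}$ — non-emptiness of the new $C_k$ is where directedness is used — contradicting minimality. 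Second, given surjectivity, suppose some $C_{i_0}$ is not a single point; since $\bk$ is algebraically closed and $C_{i_0}$ is reduced of finite type, it then has at least two $\bk$-points, so we may fix $p\in C_{i_0}(\bk)$ with $\{p\}\subsetneq C_{i_0}$. Put $C'_j=C_j\cap\phi_{i_0 j}^{-1}(p)$ for $j\ge i_0$ (non-empty by surjectivity, closed), and for arbitrary $j$ pick $k\ge j,i_0$ and set $C'_j=\phi_{jk}(C'_k)$; using directedness and the closedness of images one checks this is well defined and that $(C'_i)_i\in\mathcal{S}$. Since $C'_j\subseteq C_j$ for all $j$ and $C'_{i_0}=\{p\}\subsetneq C_{i_0}$, this again contradicts minimality. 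Hence every $C_i$ is a single point.

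The main obstacle is that $I$ is an arbitrary directed set rather than $\mathbf{N}$: for $I=\mathbf{N}$ one could simply choose a compatible point index by index, but in general the choices at all indices must be kept mutually consistent at once, which is exactly what the passage to a minimal sub-system arranges, and what forces the slightly delicate ``coherent propagation'' of a shrinking from $i_0$ to all indices in the last step. Properness is genuinely needed here (so that images are closed and the descending chain condition applies); without it the conclusion can fail, e.g.\ for a decreasing chain of cofinite open subsets of $\mathbf{A}^1$ over a countable field. One could alternatively deduce the lemma from the fact that a cofiltered limit of non-empty finite sets is non-empty, by first reducing each $Z_i$ to a finite set (for instance passing to the constructible topology, which makes $|Z_i|$ a profinite set on which the $\phi_{ij}$ act continuously), but extracting an actual $\bk$-point from the resulting compatible family of scheme-theoretic points leads back to essentially the argument above.
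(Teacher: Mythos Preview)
Your proof is correct and follows a genuinely different route from the paper's. The paper first disposes of $\bk=\bC$ by ordinary compactness, and for general $\bk$ works with the underlying spectral spaces $\vert Z_i\vert$: their inverse limit is a non-empty spectral space (citing the Stacks Project), hence has a closed point $z$; the valuative criterion of properness is then invoked to show that each image $z_i\in\vert Z_i\vert$ is closed, whence a $\bk$-point. You instead run a Zorn's-lemma argument on the poset of compatible families of non-empty closed subvarieties, using noetherianity of each $Z_i$ for the chain condition and properness (via closedness of images under proper morphisms) to propagate shrinkings coherently across the directed set. Your approach is more self-contained---it avoids the external spectral-space input---and is essentially the classical ``minimal sub-inverse-system'' technique (familiar from inverse limits of compact Hausdorff spaces) adapted to the Zariski topology; the paper's approach is shorter once the spectral-space lemma is granted and makes the role of properness, via the valuative criterion, somewhat more conceptual. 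Both arguments genuinely need properness, as your closing remark correctly observes, and the checks you flag (non-emptiness in the surjectivity step via directedness, and well-definedness of the propagated $C'_j$ for $j$ not above $i_0$) do go through using the already-established surjectivity of the minimal system.
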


\begin{proof}
If $\bk=\bC$ then $Z_i(\bC)$ is a non-empty compact Hausdorff space, and the result follows from the well-known (and easy) fact that an inverse limit of non-empty compact Hausdorff spaces is non-empty.

For a general field $\bk$, we argure as follows. (We thank Bhargav Bhatt for this argument.) Let $\vert Z_i \vert$ be the Zariski topological space underlying the scheme $Z_i$, and let $Z$ be the inverse limit of the $\vert Z_i \vert$. Since each $\vert Z_i \vert$ is a non-empty spectral space and the transition maps $\vert Z_i \vert \to \vert Z_j \vert$ are spectral (being induced from a map of varieties), $Z$ is also a non-empty spectral space \cite[Lemma~5.24.2, 5.24.5]{stacks}. It therefore has some closed point $z$. Let $z_i$ be the image of $z$ in $\vert Z_i \vert$.

We claim that $z_i$ is closed for all $i$. Suppose not, and let $0 \in I$ be such that $z_0$ is not closed. Passing to a cofinal set in $I$, we may as well assume $0$ is the unique minimal element. Let $\bk(z_i)$ be the residue field of $z_i$, and let $K$ be the direct limit of the $\bk(z_i)$. The point $z_i$ is then the image of a canonical map of schemes $a_i \colon \Spec(K) \to Z_i$. Since $z_0$ is not closed, it admits some specialization, so we may choose a valuation ring $R$ in $K$ and a non-constant map of schemes $b_0 \colon \Spec(R) \to Z_0$ extending $a_0$. Since $Z_i$ is proper, the map $a_i$ extends uniquely to a map $b_i \colon \Spec(R) \to Z_i$. By uniqueness, the $b$'s are compatible with the transition maps, and so we get an induced map $b \colon \vert \Spec(R) \vert \to Z$ extending the map $a \colon \vert \Spec(K) \vert \to Z$. Since $\vert b_0 \vert$ is induced from $b$, it follows that $b$ is non-constant. The image of the closed point in $\Spec(R)$ under $b$ is then a specialization of $z$, contradicting the fact that $z$ is closed. This completes the claim that $z_i$ is closed.

Since $z_i$ is closed, it is the image of a unique map $\Spec(\bk) \to Z_i$ of $\bk$-schemes. By uniqueness, these maps are compatible, and so give an element of $\varprojlim Z_i(\bk)$.
\end{proof}

\begin{proof}[Proof of Proposition~\ref{prop:inf}]
First suppose that $V_i$ is finite dimensional for all $i$. For $i \le j$ we have $\qrank(f \vert_{V_i}) \le \qrank(f \vert_{V_j})$ by Proposition~\ref{prop:qsubsp}, and so either $\qrank(f \vert_{V_i}) \to \infty$ or $\qrank(f \vert_{V_i})$ stabilizes. If $\qrank(f \vert_{V_i}) \to \infty$ then $\qrank(f)=\infty$ by Proposition~\ref{prop:qsubsp} and we are done. Thus suppose $\qrank(f \vert_{V_i})$ stabilizes. Replacing $I$ with a cofinal subset, we may as well assume $\qrank(f \vert_{V_i})$ is constant, say equal to $r$, for all $i$. We must show $\qrank(f)=r$. Proposition~\ref{prop:qsubsp} shows that $r \le \qrank(f)$, so it suffices to show $\qrank(f) \le r$.

Let $Z_i \subset \Gr_r(V_i)$ be the closed subvariety consisting of all codimension $r$ subspaces $E \subset V_i$ such that $f \vert_E=0$. This is non-empty by Proposition~\ref{prop:geom} since $f \vert_{V_i}$ has q-rank $r$. Suppose $i \le j$ and $z$ is a $\bk$-point of $Z_j$, that is, $E_z$ is a codimension $r$ subspace of $V_j$ on which $f$ vanishes. Of course, $f$ then vanishes on $V_i \cap E_z$, which has codimension at most $r$ in $V_i$. Since $f \vert_{V_i}$ has q-rank exactly $r$, it cannot vanish on a subspace of codimension less than $r$ (Proposition~\ref{prop:geom}), and so $V_i \cap E_z$ must have codimension exactly $r$. Thus by Lemma~\ref{lem:grmap}, intersecting with $V_i$ defines a map of varieties $Z_j \to \Gr_r(V_i)$. This maps into $Z_i$, and so for $i \le j$ we have a map $Z_j \to Z_i$. These maps clearly define an inverse system.

Appealing to Lemma~\ref{lem:invsys} we see that $\varprojlim Z_i(\bk)$ is non-empty. Let $\{z_i\}_{i \in I}$ be a point in this inverse limit, and put $E_i=E_{z_i}$. Thus $E_i$ is a codimension $r$ subspace of $V_i$ on which $f$ vanishes, and for $i \le j$ we have $E_j \cap V_i=E_i$. It follows that $E=\bigcup_{i \in I} E_i$ is a codimension $r$ subspace of $V$ on which $f$ vanishes, which shows $\qrank(f) \le r$ (Proposition~\ref{prop:geom}).

We now treat the general case, where the $V_i$ may not be finite dimensional. Write $V_i=\bigcup_{j \in J_i} W_j$ with $W_j$ finite dimensional. Then $V=\bigcup_{i \in I} \bigcup_{j \in J_i} W_j$, so
\begin{displaymath}
\qrank(f) = \sup_{i \in I} \sup_{j \in J_i} \qrank(f \vert_{W_j}) = \sup_{i \in I} \qrank(f \vert_{V_i}).
\end{displaymath}
This completes the proof.
\end{proof}

For the remainder of this section we assume that $V$ is finite dimensional. If $V$ is $d$-dimensional then the q-rank of any cubic in $P_3(V)$ is obviously bounded above by $d$. The next result gives an improved bound, and will be crucial in what follows.

\begin{proposition} \label{prop:qbd}
Suppose $\dim(V)=d$. Then $\qrank(f) \le d-\xi(d)$, where
\begin{displaymath}
\xi(d) = \left\lfloor \frac{\sqrt{8d+17}-3}{2} \right\rfloor.
\end{displaymath}
Note that $\xi(d) \approx \sqrt{2d}$.
\end{proposition}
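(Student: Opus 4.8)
The plan is to reduce, via Proposition~\ref{prop:geom}, to the following statement: if $\dim V = d$ then $f$ vanishes identically on some linear subspace $W \subseteq V$ with $\dim W = r$, where $r = \xi(d)$. Such a $W$ has codimension $d-r$, so Proposition~\ref{prop:geom} immediately gives $\qrank(f) \le d-r$. I would produce $W$ as the top of a flag $0 = W_0 \subset W_1 \subset \cdots \subset W_r$ with $\dim W_i = i$ and $f|_{W_i} = 0$ for all $i$, built up one dimension at a time.

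For the extension step, suppose $f|_W = 0$ with $\dim W = i < r$; I would need to find $v \in V \setminus W$ so that $f$ vanishes on $W + \bk v$. Since $\bk$ has characteristic different from $2$ and $3$, there is a unique symmetric trilinear form $B$ on $V$ with $B(x,x,x) = f(x)$, and $f|_W = 0$ is equivalent to the vanishing of $B$ on $W \times W \times W$. Expanding $f(w + \lambda v)$ in powers of $\lambda$ shows that $f$ vanishes on $W + \bk v$ if and only if
\begin{enumerate}
\item $B(v, w_1, w_2) = 0$ for all $w_1, w_2 \in W$;
\item $B(v, v, w) = 0$ for all $w \in W$;
\item $f(v) = 0$.
\end{enumerate}
Condition (a) is linear in $v$ and cuts out a subspace $U$ with $W \subseteq U \subseteq V$ and $\mathrm{codim}_V U \le \dim \Sym^2(W) = \binom{i+1}{2}$. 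Fixing a basis of $W$, conditions (b) and (c) define a closed subvariety $Y \subseteq \bP(U)$ cut out by at most $i$ quadrics together with the single cubic $f|_U$; moreover $\bP(W) \subseteq Y$. Hence
\begin{displaymath}
\dim Y \ \ge\ \dim \bP(U) - (i+1) \ \ge\ d - \binom{i+1}{2} - i - 2,
\end{displaymath}
and as soon as this quantity exceeds $\dim \bP(W) = i-1$, the variety $Y$ properly contains the irreducible subvariety $\bP(W)$ and therefore has a $\bk$-point $[v]$ with $v \notin W$, which is the vector sought. (For $i=0$ this is just the statement that a cubic in $d \ge 2$ variables has a nonzero root.)

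Running this for $i = 0, 1, \ldots, r-1$, the binding constraint is the last one, $i = r-1$, which amounts to an inequality of the shape $d \ge \tfrac12 r(r+3)$ up to an additive constant; inverting the corresponding quadratic in $r$ yields the closed form for $\xi(d)$. The hard part will be the bookkeeping needed to get the \emph{sharp} constant, i.e.\ to take $r$ exactly equal to $\xi(d)$ rather than a value smaller by $O(1)$: the crude count ``$i+1$ hypersurfaces in $\bP(U)$'' above loses a little, and to recover it one must argue more carefully in the final extension step --- for instance by exploiting that the cubic $f|_U$ and all the quadrics in (b) pass through the common linear space $\bP(W)$, so that $Y$ is in fact larger than the naive estimate, or by disposing of the smallest values of $d$ by hand using classical facts on linear subspaces of low-dimensional cubic hypersurfaces. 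The remaining ingredients --- existence and uniqueness of $B$, the polarization identity, the expansion of $f(w + \lambda v)$, and the standard lower bounds on dimensions of hypersurface intersections in projective space --- are routine.
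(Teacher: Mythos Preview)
Your reduction is exactly the paper's: both arguments apply Proposition~\ref{prop:geom} after producing a linear subspace of dimension $\xi(d)$ on which $f$ vanishes. The only difference is that the paper outsources this existence statement to \cite[Lemma~3.9]{hmp}, while you sketch an inductive flag construction that essentially reproves that lemma.

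Your inductive step is correct, but --- as you yourself note --- the naive count gives only the weaker constraint $d \ge \binom{r}{2}+2r$ for reaching dimension $r$, whereas the HMP bound quoted in the paper is $d \ge \binom{r+1}{2}+r-1$, which is smaller by exactly~$1$. Two remarks on your proposed repairs. First, the discrepancy is \emph{not} confined to small $d$: the two floor functions differ precisely when $d=\tfrac{r(r+3)}{2}-1$ for some integer $r\ge 2$, i.e.\ for $d=4,8,13,19,26,\ldots$, an infinite arithmetic-like progression. So ``disposing of the smallest values of $d$ by hand'' cannot close the gap. Second, the idea of exploiting that the quadrics and the cubic all contain $\bP(W)$ is the right direction, but making it yield the missing~$1$ is not routine and is essentially the content of the cited lemma; your proposal does not carry this out. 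For the purposes of Theorem~\ref{mainthm} only $\xi(d)\to\infty$ is needed and your argument already gives that, but Proposition~\ref{prop:qbd} as stated requires the sharp constant, which your sketch does not establish.
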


\begin{proof}
Let $k$ be the largest integer such that $\binom{k+1}{2}+k-1 \le d$. Then the hypersurface $f=0$ contains a linear subspace of dimension at least $k$ by \cite[Lemma~3.9]{hmp}. It follows from Proposition~\ref{prop:geom} that $\qrank(f) \le d-k$. Some simple algebra shows that $k=\xi(d)$.
\end{proof}

Suppose that $f=\sum_{i=1}^n \ell_i q_i$ is a cubic. Eventually, we want to show that if $f$ has large q-rank then its orbit under $\GL(V)$ is large. For studying the orbit, it would be convenient if the $\ell_i$'s and the $q_i$'s were in separate sets of variables, as then they could be moved independently under the group. This motivates the following definition:

\begin{definition}
We say that a cubic $f \in P_3(V)$ is {\bf separable}\footnote{This notion of separable is unrelated to the notion of separability of univariate polynomials. We do not expect this to cause confusion.} if there is a direct sum decomposition $V=V_1 \oplus V_2$ and an expression $f=\sum_{i=1}^n \ell_i q_i$ with $\ell_i \in P_1(V_1)$ and $q_i \in P_2(V_2)$.
\end{definition}

Now, if we have a cubic $f$ of high q-rank we cannot conclude, simply based on its high q-rank, that it is separable. Fortunately, the following result shows that if we are willing to degenerate $f$ a bit (which is fine for our ultimate applications), then we can make it separable, while retaining high q-rank.

\begin{proposition} \label{prop:srk}
Suppose that $f \in P_3(V)$ has q-rank $r$. Then the orbit-closure of $f$ contains a separable cubic $g$ satisfying $\tfrac{1}{2} \xi(r) \le \qrank(g)$.
\end{proposition}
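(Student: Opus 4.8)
The plan is to start from an expression $f = \sum_{i=1}^r \ell_i q_i$ realizing the q-rank, split each variable used by the $\ell_i$ into two copies, and degenerate so that one copy feeds the linear forms and the other feeds the quadratic forms. Concretely, let $x_1, \dots, x_r$ be coordinates with $\ell_i = x_i$ (after a linear change of coordinates we may assume the $\ell_i$ are linearly independent, hence part of a coordinate system; this only uses that q-rank is the minimal length of such an expression). Write each $q_i = q_i' + \sum_j x_j m_{ij} + (\text{quadratic in the } x\text{'s})$ where $q_i'$ involves only the complementary variables $y_1, y_2, \dots$ and $m_{ij}$ is linear in the $y$'s. Introduce fresh variables $x_1', \dots, x_r'$ (enlarging $V$ is harmless since we will embed into a larger space and take orbit-closure inside $P_3$ of that space — or, staying in $V$, we reuse a batch of the $y$-coordinates), and consider the one-parameter subgroup of $\GL(V)$ that rescales so as to kill every term of $f$ that mixes the ``$\ell$-block'' and the ``$q$-block''. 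The limit $g$ of this path is then of the form $\sum_i x_i' q_i'$, which is visibly separable with $V_1 = \langle x_i'\rangle$ and $V_2 = \langle y_j \rangle$. So the first task is to carry out this degeneration carefully and to check $g$ lies in the orbit-closure of $f$.

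The substantive point is the lower bound $\qrank(g) \ge \tfrac12 \xi(r)$. Here is the mechanism I expect to use. After the degeneration, $g = \sum_{i=1}^r x_i' q_i'$ where the $q_i' \in P_2(W)$ are the ``pure $y$-part'' of the original $q_i$. I claim these quadrics still have large rank collectively, in the sense that the span $Q = \langle q_1', \dots, q_r'\rangle$ is large-dimensional. Indeed, if $\dim Q \le s$, then all the $q_i'$ lie in an $s$-dimensional space of quadrics, so one can rewrite $g = \sum_{j=1}^{s} (\sum_i c_{ij} x_i') \, p_j$ with $p_j$ a basis of $Q$ and $\sum_i c_{ij}x_i'$ linear forms; this is a q-rank-$\le s$ expression for $g$, so $\qrank(g) \le \dim Q$. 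Conversely, I want to bound $\dim Q$ below in terms of $r = \qrank(f)$. The key is that the original $q_i$ and their degenerations differ only by terms involving $x$'s, and if the $q_i'$ spanned a small space then the original $f$ would have small q-rank: writing $q_i = q_i' + (\text{stuff using the } x_j)$, a small span among the $q_i'$ lets one absorb the low-rank part and re-express $f$ using few linear forms. Turning this into the explicit constant $\tfrac12\xi(r)$ is where I expect the bookkeeping — and the factor of $\tfrac12$ — to come from: roughly, each ``mixing'' term absorbed costs one unit of the codimension, and the combinatorics of how many variables are needed to carry both the linear forms and the absorbed quadratic contributions forces the square-root loss quantified by $\xi$.

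More precisely, I would argue via Proposition~\ref{prop:geom}: $g$ vanishes on a subspace $W'$ of codimension $\qrank(g)$; pulling this back through the degeneration path and intersecting with the relevant coordinate subspaces produces a subspace on which $f$ vanishes, of codimension at most $2\qrank(g)$ plus a correction controlled by $\xi$, which gives $\qrank(f) \le 2\qrank(g) + (\text{lower order})$, hence $\qrank(g) \gtrsim \tfrac12 \qrank(f)$. The appearance of $\xi(r)$ rather than $r$ itself suggests that the clean statement is obtained not by comparing $g$ directly to $f$, but by first applying Proposition~\ref{prop:qbd} to the $q$-block: the $q_i'$ live on a space of some dimension $d'$, so $\qrank$ of any combination is at most $d' - \xi(d')$, and balancing the number of variables spent on $x_i'$ against those spent on the $y$'s is what produces the $\xi$.

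The main obstacle, I expect, is precisely this quantitative lower bound: showing that the separating degeneration does not destroy too much q-rank. Checking that $g$ is in the orbit-closure and is separable is essentially formal (a one-parameter-subgroup limit), and subadditivity (Proposition~\ref{prop:subadd}) together with the geometric criterion (Proposition~\ref{prop:geom}) and the dimension bound (Proposition~\ref{prop:qbd}) give all the tools; the difficulty is combining them with the right accounting to land exactly at $\tfrac12\xi(r)$ rather than something weaker. I would be prepared for the possibility that the clean argument proceeds by induction on $r$, peeling off one linear form at a time and tracking how q-rank of the separable part decreases.
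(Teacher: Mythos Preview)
Your proposal has a genuine gap at the lower-bound step. The degeneration you describe lands on what one should call $f_1$: writing $f=\sum_{i=1}^r x_i q_i$ and decomposing $f=f_1+f_2+f_3$ by degree in $x_1,\dots,x_r$, your $g$ is the degree-one piece $f_1=\sum_i x_i q_i'$. But nothing in your sketch bounds $\qrank(f_1)$ from below. Your argument that ``small span among the $q_i'$ forces small q-rank of $f$'' only shows $\qrank(f_1)\le\dim\langle q_i'\rangle$; the remaining pieces $f_2+f_3$ are untouched and may carry essentially all of the q-rank. Likewise, the ``pull back a vanishing subspace through the degeneration'' idea cannot work as stated: a subspace on which $f_1$ vanishes tells you nothing about where $f_2$ or $f_3$ vanish.

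The idea you are missing is that there are \emph{two} natural separable degenerations, not one. Both $f_1=\sum_i x_i q_i''$ (linears in the $x$-block, quadratics in the complementary block) and $f_2=\sum_{i\le j} x_ix_j\,\ell_{ij}$ (quadratics in the $x$-block, linears in the complementary block) are separable, just with the roles of $V_1$ and $V_2$ swapped. The obstruction is $f_3$, a cubic in only $r$ variables; this is exactly where Proposition~\ref{prop:qbd} enters, giving $\qrank(f_3)\le r-\xi(r)$. One first restricts to kill the $r-\xi(r)$ variables carrying $f_3$, so the restriction $f'=f_1'+f_2'$ satisfies $\qrank(f')\ge\xi(r)$ by Proposition~\ref{prop:qsubsp}. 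Subadditivity then forces one of $f_1',f_2'$ to have q-rank at least $\tfrac12\xi(r)$, and a one-parameter torus limit in $\GL(V)$ isolates whichever one wins---no fresh variables are needed. The factor $\tfrac12$ is precisely the cost of not knowing in advance which separable piece is large, and $\xi(r)$ enters as the q-rank that survives after discarding $f_3$, not through any rank bound on the $q$-block.
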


\begin{proof}
Let $\{x_i\}$ be a basis for $P_1(V)$. After possibly making a linear change of variables, we can write $f = \sum_{i=1}^r x_i q_i$. Write $f=f_1+f_2+f_3$, where $f_i$ is homogeneous of degree $i$ in the variables $\{x_1,\ldots,x_r\}$. Since $f_3$ has degree~3 in the variables $\{x_1,\ldots,x_r\}$, it can contain no other variables, and can thus be regarded as an element of $P_3(\bk^r)$. Therefore, by Proposition~\ref{prop:qbd}, we have $\qrank(f_3) \le r-\xi(r)$. After possibly making a linear change of variables in $\{x_1, \ldots, x_r\}$, we can write $f_3=\sum_{i=\xi(r)+1}^r x_i q_i'$ for some $q_i'$. Let $f'$ (resp.\ $f'_j$) be the result of setting $x_i=0$ in $f$ (resp.\ $f_j$), for $\xi(r)<i \le r$. We have $\qrank(f') \ge \xi(r)$ by Proposition~\ref{prop:qsubsp}. Of course, $f'_3=0$, so $f'=f'_1+f'_2$. By subadditivity (Proposition~\ref{prop:subadd}), at least one of $f'_1$ or $f'_2$ has q-rank $\ge \tfrac{1}{2} \xi(r)$.

We have $f_1=\sum_{i=1}^r x_i q_i''$ where $q_i''$ is a quadratic form in the variables $x_i$ with $i>r$. Thus $f_1$, and $f_1'$, is separable. We have $f_2=\sum_{1 \le i \le j \le r} x_i x_j \ell_{i,j}$ where $\ell_{i,j}$ is a linear form in the variables $x_i$ with $i>r$. Thus $f_2$, and $f_2'$, is separable.

To complete the proof, it suffices to show that $f'_1$ and $f'_2$ belong to the orbit-closure of $f$, as we can then take $g=f'_1$ or $g=f'_2$. It is clear that $f'$ is in the orbit-closure of $f$, so it suffices to show that $f'_1$ and $f'_2$ are in the orbit-closure of $f'$. Consider the element $\gamma_t$ of $\GL_n$ defined by
\begin{displaymath}
\gamma_t(x_i) = \begin{cases}
t^2 x_i & 1 \le i \le r \\
t^{-1} x_i & r < i \le n \end{cases}
\end{displaymath}
Then $\gamma_t(f'_1)=f'_1$ and $\gamma_t(f'_2)=t^3 f'_2$. Thus $\lim_{t \to 0} \gamma_t(f') = f'_1$. A similar construction shows that $f'_2$ is in the orbit-closure of $f'$.
\end{proof}

Suppose that $f=\sum_{i=1}^n \ell_i q_i$ is a cubic of high q-rank. One would like to be able to conclude that the $q_i$ then have high ranks as well. We now prove two results along this line. For a linear subspace $Q \subset P_2(V)$, we let $\maxrank(Q)$ be the maximum of the ranks of elements of $Q$, and we let $\minrank(Q)$ be the minimum of the ranks of the non-zero elements of $Q$ (or 0 if $Q=0$).

\begin{proposition} \label{prop:maxrank}
Suppose $f=\sum_{i=1}^n \ell_i q_i$ has q-rank $r$, and let $Q \subset P_2(V)$ be the span of the $q_i$. Then for every subspace $Q'$ of $Q$ we have
\begin{displaymath}
\codim(Q:Q')+\maxrank(Q') \ge r.
\end{displaymath}
\end{proposition}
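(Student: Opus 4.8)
\medskip
\noindent\textbf{Proof proposal.}
The plan is to break $f$ into a piece whose q-rank is bounded by $\codim(Q:Q')$ and a piece built entirely from quadrics in $Q'$, and then to add the two estimates with subadditivity (Proposition~\ref{prop:subadd}). So everything reduces to showing: if a cubic has an expression $\sum_j \ell_j q_j$ with all $q_j$ lying in a subspace $Q'$ of $P_2(V)$, then its q-rank is at most $\maxrank(Q')$.

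To set up the splitting, let $m=\dim Q$ and $k=\codim(Q:Q')$, and extend a basis of $Q'$ to a basis $p_1,\dots,p_m$ of $Q$ in which $p_{k+1},\dots,p_m$ is a basis of $Q'$. Expanding each $q_i$ in this basis and collecting coefficients of the $p_a$ gives $f=\sum_{a=1}^m \lambda_a p_a$ with $\lambda_a\in P_1(V)$. Put $g_1=\sum_{a\le k}\lambda_a p_a$ and $g_2=\sum_{a>k}\lambda_a p_a$, so $f=g_1+g_2$; by construction $\qrank(g_1)\le k=\codim(Q:Q')$, and every quadric occurring in $g_2$ lies in $Q'$. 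By Proposition~\ref{prop:subadd} it is now enough to show $\qrank(g_2)\le\maxrank(Q')$, which by Proposition~\ref{prop:geom} means producing a subspace $W_0\subseteq V$ of codimension at most $\maxrank(Q')$ with $g_2|_{W_0}=0$.

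The heart of the matter is the following claim: \emph{if $R\subseteq P_2(V)$ is a linear subspace with $\maxrank(R)=s$, then there is a subspace $W_0\subseteq V$ of codimension $s$ on which every form in $R$ vanishes identically.} Granting this with $R=Q'$, each $p_a$ $(a>k)$, and hence $g_2$, vanishes on $W_0$, and we are done. To prove the claim, fix $q_0\in R$ of rank $s$ and let $W_0$ be the radical of $q_0$ (the kernel of $v\mapsto B_{q_0}(v,\cdot)$, where $B_q$ denotes the symmetric bilinear form of a quadric $q$), so $\codim W_0=s$. For $q\in R$, choose a complement $U$ of $W_0$ in $V$; with respect to $V=W_0\oplus U$ the matrix of $q_0$ is $\left(\begin{smallmatrix}0&0\\0&A\end{smallmatrix}\right)$ with $A$ invertible of size $s$, and the matrix of $q$ is $\left(\begin{smallmatrix}P&B\\B^{t}&S\end{smallmatrix}\right)$ with $P$ the matrix of $q|_{W_0}$. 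For generic $t$ the block $A+tS$ is invertible, and a Schur complement calculation gives
\begin{displaymath}
\mathrm{rank}(q_0+tq)=s+\mathrm{rank}\bigl(P-tB(A+tS)^{-1}B^{t}\bigr).
\end{displaymath}
Since the matrix inside the last $\mathrm{rank}$ specializes to $P$ at $t=0$, lower semicontinuity of rank gives $\mathrm{rank}(q_0+tq)\ge s+\mathrm{rank}(P)$ for generic $t$; but $q_0+tq\in R$ forces $\mathrm{rank}(q_0+tq)\le s$, so $P=0$, i.e.\ $q|_{W_0}=0$. (We use that $\bk$ has characteristic $\ne 2$ here, as everywhere.) Assembling, $r=\qrank(f)\le\qrank(g_1)+\qrank(g_2)\le\codim(Q:Q')+\maxrank(Q')$.

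The change of presentation and the final appeal to subadditivity are routine; the step I expect to be the real obstacle is the claim. Large q-rank is cheap to certify from one high-rank quadric, but here one needs the \emph{whole} subspace $Q'$ to be annihilated by a \emph{single} subspace $W_0$ of the expected small codimension, and it is the one-parameter degeneration $q_0+tq$ inside $R$, combined with the rank ceiling $\maxrank(R)=s$, that forces this simultaneous vanishing.
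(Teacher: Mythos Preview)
Your proof is correct and follows essentially the same route as the paper's: split $f$ into a part built from $\codim(Q:Q')$ many linear-times-quadric terms and a part built from quadrics in $Q'$, then show the latter has q-rank at most $\maxrank(Q')$ by exhibiting the radical $W_0$ of a maximal-rank element of $Q'$ as a common isotropic subspace. The only cosmetic difference is in justifying the key claim: the paper diagonalizes $q_0$ as $x_1^2+\cdots+x_s^2$ and argues that a monomial $x_jx_k$ with $j,k>s$ in some $q\in Q'$ would let one push the rank of $q_0+cq$ above $s$, whereas you package the same observation via the Schur complement identity $\rank(q_0+tq)=s+\rank\bigl(P - tB(A+tS)^{-1}B^{t}\bigr)$ and semicontinuity; these are the same computation in different coordinates.
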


\begin{proof}
We may as well assume that $\ell_i$ and $q_i$ are linearly independent. Thus $\dim(Q)=n$. Let $Q'$ be a subspace of dimension $n-d$. After making a linear change of variables in the $q$'s and $\ell$'s, we may as well assume that $Q'$ is the span of $q_1, \ldots, q_{n-d}$. Let $t=\maxrank(Q')$. We must show that $d+t \ge r$. Let $q' \in Q'$ have rank $t$. Choose a  basis $\{x_i\}$ of $P_1(V)$ so that $q'=x_1^2+\cdots+x_t^2$. If some $q_i$ for $1 \le i \le n-d$ had a term of the form $x_j x_k$ with $j,k>t$ then some linear combination of $q_i$ and $q'$ would have rank $>t$, a contradiction. Thus every term of $q_i$, for $1 \le i \le n-d$, has a variable of index $\le t$, and so we can write $q_i=\sum_{j=1}^t x_j m_{i,j}$ where $m_{i,j} \in P_1(V)$. But now
\begin{displaymath}
f=\sum_{i=1}^{n-d} \ell_i q_i + \sum_{i=n-d+1}^n \ell_i q_i
=\sum_{j=1}^t x_j q_j' + \sum_{i=n-d+1}^n \ell_i q_i
\end{displaymath}
where $q_j'=\sum_{i=1}^{n-d} \ell_i m_{i,j}$. This shows $r=\qrank(f) \le t+d$, which completes the proof.
\end{proof}

In our eventual application, it is actually $\minrank$ that is more important than $\maxrank$. Fortuantely, the above result on $\maxrank$ automatically gives a result for $\minrank$, thanks to the following general proposition.

\begin{proposition} \label{prop:minrank}
Let $Q \subset P_2(V)$ be a linear subspace and let $r$ be a positive integer. Suppose that
\begin{displaymath}
\codim(Q:Q')+\maxrank(Q') \ge r
\end{displaymath}
holds for all linear subspaces $Q' \subset Q$. Let $k$ and $s$ be positive integers satisfying
\begin{equation}
\label{eq:kscond}
(2^k-1)(s-1)+k \le r.
\end{equation}
Then there exists a $k$-dimensional linear subspace $Q' \subset Q$ with $\minrank(Q') \ge s$.
\end{proposition}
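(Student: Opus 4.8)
The plan is to induct on $k$, reducing the statement to a ``gap dichotomy'' for the rank spectrum of $Q$ that is extracted from the hypothesis. Two degenerate cases are immediate: if $s=1$ then $\minrank(Q')\ge 1$ merely says $Q'$ has no zero vector, so any $k$-dimensional subspace works (and one exists, since $Q'=0$ in the hypothesis gives $\dim Q\ge r\ge k$); and if $k=1$ then $\maxrank(Q)\ge r\ge s$ (take $Q'=Q$), so $Q$ has a form of rank $\ge s$. Assume henceforth $s\ge 2$ and $k\ge 2$. I will use two elementary facts. First, for $q\in P_2(V)$ and $W\subseteq V$ of codimension $c$, $\operatorname{rank}(q|_W)\ge\operatorname{rank}(q)-2c$ (restrict one hyperplane at a time, each step lowering the rank of the associated bilinear form by at most $2$). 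Second, as a consequence, if $Q$ satisfies the hypothesis with parameter $r$ and $W$ has codimension $c$, then $Q|_W\subseteq P_2(W)$ satisfies it with parameter $r-2c$: pull a subspace $S\subseteq Q|_W$ back to $\widehat{S}\subseteq Q$ along the surjective restriction map, note $\codim(Q|_W:S)=\codim(Q:\widehat{S})$, and restrict a top-rank element of $\widehat{S}$ to get $\maxrank(S)\ge\maxrank(\widehat{S})-2c$; then add.

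The core is the following \emph{gap dichotomy}: for any integer $T$ with $1\le T\le\maxrank(Q)$, either (i) $Q$ contains a form of rank in $[T,2T-2]$, or (ii) $Q$ contains a subspace $Q_0$ with $\dim Q_0\ge r-T+1$ and $\minrank(Q_0)\ge 2T-1$. To prove it, let $Q_1\subseteq Q$ be \emph{maximal} with $\maxrank(Q_1)\le T-1$, so the hypothesis gives $\codim(Q:Q_1)\ge r-(T-1)$, and assume (i) fails. For $q\in Q\setminus Q_1$, maximality forces some $q''+cq\in Q_1+\bk q$ (with $q''\in Q_1$) to have rank $\ge T$, hence $c\ne 0$ and, by failure of (i), rank $\ge 2T-1$; then $\operatorname{rank}(q)=\operatorname{rank}\big(c^{-1}(q''+cq)-c^{-1}q''\big)\ge(2T-1)-(T-1)=T$, and failure of (i) once more gives $\operatorname{rank}(q)\ge 2T-1$. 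So any complement $Q_0$ of $Q_1$ in $Q$ satisfies (ii).

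For the induction I would apply the dichotomy with $T=s$ (valid as $s\le r\le\maxrank(Q)$). If (ii) holds, $\dim Q_0\ge r-s+1\ge(2^k-2)(s-1)+k\ge k$ and $\minrank(Q_0)\ge 2s-1\ge s$, so any $k$-dimensional subspace of $Q_0$ works. If (i) holds, fix $q_1\in Q$ with $t_1:=\operatorname{rank}(q_1)\in[s,2s-2]$. For $k=2$: take $q_2\in Q$ of rank $\maxrank(Q)\ge r\ge 3s-1\ge t_1+s$; then $q_1,q_2$ are independent (distinct ranks), and a nonzero $c_1q_1+c_2q_2$ has rank $\ge\operatorname{rank}(q_2)-\operatorname{rank}(q_1)\ge s$ when $c_2\ne 0$ and rank $t_1\ge s$ when $c_2=0$. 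For $k\ge 3$: let $R_1\subseteq V$ be the radical of $q_1$, of codimension $t_1$, so $q_1|_{R_1}=0$; by the transfer fact $Q|_{R_1}$ satisfies the hypothesis with parameter $r-2t_1\ge r-4(s-1)\ge g(k-1,s)$, where $g(k,s):=(2^k-1)(s-1)+k$ and we used $g(k,s)-g(k-1,s)=2^{k-1}(s-1)+1\ge 4(s-1)$ for $k\ge 3$. By induction pick a $(k-1)$-dimensional $S\subseteq Q|_{R_1}$ with $\minrank(S)\ge s$ and lift it to a $(k-1)$-dimensional $Q''\subseteq Q$ mapping isomorphically onto $S$; since $q_1|_{R_1}=0\ne q_1$ and restriction is injective on $Q''$, we get $q_1\notin Q''$, so $Q':=Q''+\bk q_1$ is $k$-dimensional. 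For nonzero $\xi=q+cq_1$ ($q\in Q''$): if $q\ne 0$ then $\xi|_{R_1}=q|_{R_1}\in S\setminus\{0\}$, so $\operatorname{rank}(\xi)\ge\operatorname{rank}(\xi|_{R_1})\ge s$; if $q=0$ then $\operatorname{rank}(\xi)=t_1\ge s$. Hence $\minrank(Q')\ge s$, closing the induction.

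I expect the gap dichotomy, together with the arithmetic of feeding it the right thresholds, to be the main obstacle: the genuine content is that $\codim(Q:Q')+\maxrank(Q')\ge r$ rigidly constrains which ranks occur in $Q$, so that the construction either finds the form it needs (of moderate rank) or is instead handed a large subspace of uniformly large minrank. Balancing the loss $2t_1$ incurred at the radical step against the increment $g(k,s)-g(k-1,s)$, and isolating $k=2$ (where one cannot afford to pass to a radical and must exploit a single maximal-rank form directly), is exactly what pins down the exponential bound $(2^k-1)(s-1)+k$.
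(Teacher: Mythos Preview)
Your argument is correct, but it follows a genuinely different route from the paper's. The paper never restricts to a subspace of $V$; instead it fixes a basis $q_1,\dots,q_n$ of $Q$ whose rank sequence is lexicographically minimal, and then builds $p_1,\dots,p_k$ directly inside $Q$ with $\rank(p_i)\in[m_i,m_i+s-2]$ where $m_i=(2^i-1)(s-1)+1$. The key lemma there (your ``gap dichotomy'' plays the analogous role) is that from forms all of rank $<s$ whose span reaches rank $\ge t$, one can extract a combination with rank in $[t,t+s-2]$. The geometric separation of ranks --- each $p_\ell$ has rank exceeding the sum of the ranks of $p_1,\dots,p_{\ell-1}$ by at least $s$ --- is what forces every nonzero combination to have rank $\ge s$, via pure subadditivity.

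Your approach trades this global rank bookkeeping for a local geometric move: once you locate $q_1$ of moderate rank, you pass to its radical $R_1$, where $q_1$ dies, and the hypothesis survives with a controlled loss. The arithmetic $g(k,s)-g(k-1,s)=2^{k-1}(s-1)+1\ge 4(s-1)$ for $k\ge3$ exactly accounts for the $2t_1\le 4(s-1)$ drop in the parameter. The lifted $(k-1)$-space together with $q_1$ then works because everything in it restricts to something of rank $\ge s$ on $R_1$. This is clean and arguably more conceptual. The trade-off is generality: the paper notes that its proof uses only subadditivity of rank and hence goes through for any subadditive invariant on a vector space, whereas your argument genuinely needs the quadratic-form structure (the radical, and the bound $\rank(q|_W)\ge\rank(q)-2\codim W$). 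For the purposes of this proposition that costs nothing, but it is worth being aware of.
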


\begin{lemma} \label{lem:minrank}
Let $q_1, \ldots, q_n \in P_2(V)$ be quadratic forms of rank $<s$. Suppose there is a linear combination of the $q$'s that has rank at least $t$. Then there is a linear combination $q'$ of the $q$'s satisfying $t \le \rank(q') \le t+s-2$.
\end{lemma}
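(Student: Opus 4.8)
The plan is to run a discrete intermediate value argument on ranks, using that the rank of a quadratic form is subadditive. Recall that over $\bk$ the rank of a quadratic form equals the least number of squares of linear forms needed to express it (this is the normalization already used in the proof of Proposition~\ref{prop:maxrank}), so $\rank(p+q)\le\rank(p)+\rank(q)$ for all $p,q\in P_2(V)$; writing $p=(p+q)+(-q)$ this also gives $\rank(p+q)\ge\rank(p)-\rank(q)$. Hence if $\rank(q)<s$ then $\lvert\rank(p+q)-\rank(p)\rvert\le s-1$: adding a scalar multiple of one of the forms $q_i$ changes the rank by at most $s-1$.

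Given this, I would start from a linear combination $p=\sum_{i=1}^n c_iq_i$ with $\rank(p)\ge t$, which exists by hypothesis. If already $\rank(p)\le t+s-2$ there is nothing to do, so assume $\rank(p)\ge t+s-1$. I would then look at the partial sums $p_j=\sum_{i=j+1}^{n}c_iq_i$ for $j=0,1,\dots,n$, noting that $p_0=p$, that $p_n=0$, and that consecutive terms differ by $c_jq_j$, a form of rank $<s$. By the first step, $\lvert\rank(p_{j-1})-\rank(p_j)\rvert\le s-1$ for each $j$. Since the rank sequence starts at $\ge t+s-1$ and ends at $0$, I would take the least index $j$ with $\rank(p_j)\le t+s-2$; then $j\ge 1$, so $\rank(p_{j-1})\ge t+s-1$, whence $\rank(p_j)\ge\rank(p_{j-1})-(s-1)\ge t$. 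The form $q'=p_j$ is then a linear combination of the $q$'s with $t\le\rank(q')\le t+s-2$, as required.

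I expect the only delicate point to be the endpoint bookkeeping: it is essential to select the \emph{first} time the rank drops to $\le t+s-2$ (rather than, say, the last time it is $\ge t$), so that its immediate predecessor is still $\ge t+s-1$ and the single-step bound $s-1$ prevents $\rank(p_j)$ from having already undershot $t$. This is exactly why the upper bound comes out as $t+s-2$ and not $t+s-1$. Everything else — subadditivity of rank and the telescoping of the partial sums — is routine.
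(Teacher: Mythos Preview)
Your proof is correct and essentially the same as the paper's: both use subadditivity of rank to bound the effect of removing a single $q_i$ by $s-1$, then extract a combination whose rank lands in $[t,t+s-2]$. The paper phrases this via a minimality argument (take a combination of rank $\ge t$ with the fewest nonzero coefficients; if its rank exceeded $t+s-2$, dropping one term would still give rank $\ge t$, contradicting minimality), while you phrase it as a discrete intermediate value argument along partial sums---but the underlying idea is identical.
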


\begin{proof}
Let $q'=\sum_{i=1}^k a_i q_i$ be a linear combination of the $q$'s with $\rank \ge t$ and $k$ minimal. Since $\rank(q_k) \le s-1$, it follows that $\rank(q'-a_k q_k) \ge \rank(q')-(s-1)$. Thus if $\rank(q') \ge t+s-1$ then $\sum_{i=1}^{k-1} a_i q_i$ would have rank $\ge t$, contradicting the minimality of $k$. Therefore $\rank(q') \le t+s-2$.
\end{proof}

\begin{proof}[Proof of Proposition~\ref{prop:minrank}]
Let $q_1, \ldots, q_n$ be a basis for $Q$ so that $(\rank(q_1), \ldots, \rank(q_n))$ is lexicographically minimal. In particular, this implies that $\rank(q_1) \le \cdots \le \rank(q_n)$. If $\rank(q_{n-k+1}) \ge s$ then lexicographic minimality ensures that any non-trivial linear combination of $q_{n-k+1}, \ldots, q_n$ has rank at least $s$, and so we can take $Q'$ to be the span of these forms. Thus suppose that $\rank(q_{n-k+1})<s$. In what follows, we put $m_i=(2^i-1)(s-1)+1$. Note that $m_k \le r$. In fact, $n-r+m_k \le n-k+1$, and so $\rank(q_{n-r+m_k})<s$.

For $1 \le \ell \le k$, consider the following statement:
\begin{itemize}
\item[$(S_{\ell})$] There exist linearly independent $p_1, \ldots, p_{\ell}$ such that: (i) $p_i$ is a linear combination of $q_1, \ldots, q_{n-r+m_i}$; (ii) $m_i \le \rank(p_i) \le m_i+s-2$; and (iii) the span of $p_1, \ldots, p_{\ell}$ has minrank at least $s$.
\end{itemize}
We will prove $(S_{\ell})$ by induction on $\ell$. Of course, $(S_k)$ implies the proposition.

First consider the case $\ell=1$. The statement $(S_1)$ asserts that there exists a non-zero linear combination $p$ of $q_1, \ldots, q_{n-r+s}$ such that $s \le \rank(p) \le 2s-2$. Since the span of $q_1, \ldots, q_{n-r+s}$ has codimension $r-s$ in $Q$, our assumption guarantees that some linear combination $p$ of these forms has rank at least $s$. Since each form has rank $<s$, Lemma~\ref{lem:minrank} ensures we can find $p$ with $\rank(p) \le s+(s-2)$.

We now prove $(S_{\ell})$ assuming $(S_{\ell-1})$. Let $(p_1, \ldots, p_{\ell-1})$ be the tuple given by $(S_{\ell-1})$. The span of $q_1, \ldots, q_{n-r+m_{\ell}}$ has codimension $r-m_{\ell}$ in $Q$, and so our assumption guarantees that some linear combination $p_{\ell}$ has rank at least $m_{\ell}$. By Lemma~\ref{lem:minrank}, we can ensure that this $p_{\ell}$ has rank at most $m_{\ell}+s-2$. Thus (i) and (ii) in $(S_{\ell})$ are established.

We now show that any non-trivial linear combination $\sum_{i=1}^{\ell} \lambda_i p_i$ has rank at least $s$, which will show that the $p$'s are linearly independent and establish (iii) in $(S_{\ell})$. If $\lambda_{\ell}=0$ then the rank is at least $s$ by the assumption on $(p_1, \ldots, p_{\ell-1})$. Thus assume $\lambda_{\ell} \ne 0$. We have
\begin{displaymath}
\rank \left( \sum_{i=1}^{\ell-1} \lambda_i p_i \right) \le \sum_{i=1}^{\ell-1} \rank(p_i) \le
\sum_{i=1}^{\ell-1} (m_i+s-2) = m_{\ell}-s.
\end{displaymath}
Since $\rank(p_{\ell}) \ge m_{\ell}$, we thus see that $\sum_{i=1}^{\ell} \lambda_i p_i$ has rank at least $s$, which completes the proof.
\end{proof}

\begin{remark}
Proposition~\ref{prop:minrank} is not specific to ranks of quadratic forms: it applies to any subadditive invariant on a vector space.
\end{remark}

Combining the Propositions~\ref{prop:maxrank} and~\ref{prop:minrank}, we obtain:

\begin{corollary} \label{cor:minrank}
Suppose $f=\sum_{i=1}^n \ell_i q_i$ has q-rank $r$, let $Q$ be the span of the $q_i$'s, and let $k$ and $s$ be positive integers such that \eqref{eq:kscond} holds. Then there exists a $k$-dimensional linear subspace $Q' \subset Q$ with $\minrank(Q') \ge s$.
\end{corollary}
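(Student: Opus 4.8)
This is a direct concatenation of the two preceding propositions, so the plan is essentially just to check that their hypotheses and conclusions line up. First I would invoke Proposition~\ref{prop:maxrank}: since $f = \sum_{i=1}^n \ell_i q_i$ has q-rank $r$ and $Q$ is the span of the $q_i$, we get the inequality $\codim(Q:Q') + \maxrank(Q') \ge r$ for \emph{every} linear subspace $Q' \subset Q$. This is precisely the standing hypothesis required by Proposition~\ref{prop:minrank}.

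Then I would feed this into Proposition~\ref{prop:minrank} with the same $r$ and with $k$, $s$ the given positive integers, which by assumption satisfy the condition $(2^k-1)(s-1)+k \le r$ of \eqref{eq:kscond}. The proposition then outputs a $k$-dimensional subspace $Q' \subset Q$ with $\minrank(Q') \ge s$, which is exactly the assertion of the corollary.

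Since every step is just a citation, I anticipate no obstacle at all; the only thing worth double-checking is bookkeeping, namely that the integer $r$ playing the role of the q-rank in Proposition~\ref{prop:maxrank} is the same $r$ appearing in the hypothesis $(2^k-1)(s-1)+k \le r$ and in Proposition~\ref{prop:minrank}, which it is by the way the corollary is phrased. Thus the proof is a single sentence: apply Proposition~\ref{prop:maxrank} to verify the hypothesis of Proposition~\ref{prop:minrank}, then apply the latter.
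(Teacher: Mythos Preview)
Your proposal is correct and matches the paper's own approach exactly: the paper simply states that the corollary follows by combining Propositions~\ref{prop:maxrank} and~\ref{prop:minrank}, with no further argument. Your bookkeeping check that the same $r$ threads through both propositions is appropriate and the only thing there is to verify.
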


\section{Proof of Theorem~\ref{mainthm}} \label{sec:proof}

We now prove the main theorems of the paper. We require the following result (see \cite[Proposition~3.3]{eggermont} and its proof):

\begin{theorem} \label{thm:deg2}
Let $x$ be a point in $P_2(V)^n \times P_1(V)^m$, with $V$ finite dimensional. Write $x$ as $(q_1, \ldots, q_n; \ell_1, \ldots, \ell_m)$, and let $Q \subset P_2(V)$ be the span of the $q_i$. Let $W$ be a $d$-dimensional subspace of $V$. Suppose that $\ell_1, \ldots, \ell_m$ are linearly independent and that $\minrank(Q) \ge dn2^n+2(n+1)m$. Then the orbit-closure of $x$ surjects onto $P_2(W)^n \times P_1(W)^m$.
\end{theorem}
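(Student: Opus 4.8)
The plan is to reduce, by $\GL(V)$-degenerations, to a ``decoupled'' configuration and then to reach every target by a single one-parameter limit; this is essentially the content of \cite[Proposition~3.3]{eggermont}, whose proof runs along these lines. Since the orbit-closure $\overline{\GL(V) \cdot x}$ is closed and $\GL(V)$-stable, it suffices, given a target $y = (\bar q_1, \dots, \bar q_n; \bar \ell_1, \dots, \bar \ell_m) \in P_2(W)^n \times P_1(W)^m$, to (a) degenerate $x$ to a normal form $x^\circ$ that is ``uncoupled'', and then (b) produce $y$ in the orbit-closure of $x^\circ$. The normal form I would aim for is one admitting a direct sum decomposition $V = B_0 \oplus B_1 \oplus \cdots \oplus B_n \oplus C$ in which $\ell_1^\circ, \dots, \ell_m^\circ$ is a coordinate system on $B_0$ and each $q_i^\circ$ is a quadratic form of rank $\ge d$ supported on $B_i$; the gain is that the linear forms occurring in the $q_i^\circ$, together with the $\ell_j^\circ$, are then collectively linearly independent. (We may assume $\dim Q = n$; if the $q_i$ satisfy linear relations one works instead with a basis of $Q$ and the correspondingly constrained target.)

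Step (b) is the routine half, and it is where the hypothesis ``rank $\ge d$ on each block'' is used. Extending $\bar q_i, \bar \ell_j$ by zero off $W$ to $\tilde q_i \in P_2(V)$ (of rank $\le d = \dim W$) and $\tilde \ell_j \in P_1(V)$, it is enough to realize $(\tilde q_1, \dots, \tilde q_n; \tilde \ell_1, \dots, \tilde \ell_m)$ in $\overline{\GL(V) \cdot x^\circ}$, since restricting back to $W$ recovers $y$. Fix expressions $q_i^\circ = \sum_{k=1}^{r_i} u_{ik}^2$ with $r_i = \rank(q_i^\circ) \ge d$ and $\tilde q_i = \sum_{k=1}^d v_{ik}^2$, and let $\psi$ be the linear map on the span of $\{u_{ik}\} \cup \{\ell_j^\circ\}$ --- a linearly independent set, by the block structure --- sending $u_{ik} \mapsto v_{ik}$ for $k \le d$, $u_{ik} \mapsto 0$ for $k > d$, and $\ell_j^\circ \mapsto \tilde \ell_j$. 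I would then build a family $g_t \in \GL(V)$, invertible for $t \ne 0$, whose action on linear forms tends to $\psi$ as $t \to 0$: take $g_t$ equal to $\psi$ on a complement of $\ker \psi$, and ``park'' $\ker \psi$ together with a complement of the span of $\{u_{ik}\} \cup \{\ell_j^\circ\}$ inside a complement of the image of $\psi$, scaled by $t$; the dimensions match up so that $g_t$ is invertible. Then $g_t \cdot q_i^\circ = \sum_k (g_t u_{ik})^2 \to \tilde q_i$ and $g_t \cdot \ell_j^\circ \to \tilde \ell_j$, as desired.

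Step (a), the production of the normal form, is the crux, and it is here that the bound $dn2^n + 2(n+1)m$ is spent. One first separates the linear forms from the quadrics: after a change of coordinates turning $\ell_1, \dots, \ell_m$ into coordinate functions, one degenerates $x$ so as to remove the dependence of the $q_i$ on those coordinates, at a cost of a bounded drop in $\minrank(Q)$ per linear form when passing to the relevant complementary subspace; carrying this loss through the later $n$-step decoupling of the $q_i$ (and of their generic linear combination) accounts for the $2(n+1)m$ term. One is then left with quadrics $q_1, \dots, q_n$ whose span still has $\minrank \ge dn2^n$, and must decouple them: for $i = 1, \dots, n$, extract a private $d$-dimensional block $B_i$ on which $q_i$ is nondegenerate, while killing the cross-terms between distinct blocks and all leftover terms. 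The obstruction is that no single torus does this: if block $B_i$ is given weight $w_i$, the on-block part of $q_i$ has weight $2w_i$ whereas a cross-term between $B_i$ and $B_j$ has weight $w_i + w_j$, so keeping every on-block part forces all $w_i$ equal, and then the cross-terms survive too. One is therefore forced to decouple iteratively --- extract one block, restrict the remaining quadrics to a complement of it, and recurse --- and each restriction compounds the loss in $\minrank$, which is exactly what forces the exponential factor $2^n$, with $d$ the dimension of a block. This inductive decoupling, together with the bookkeeping producing the stated constant, is the content of \cite[Proposition~3.3]{eggermont}.

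The main obstacle is thus Step (a), and within it the decoupling of the quadrics: the weight computation shows that no ``one-shot'' torus normal form is available, so one is committed to an iteration whose cost in $\minrank$ is intrinsically exponential in $n$. Step (b), and the separation of the linear forms, are by comparison routine.
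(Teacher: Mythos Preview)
The paper does not prove this theorem at all; it is imported as a black box from \cite[Proposition~3.3]{eggermont}. There is therefore no in-paper argument to compare your sketch against. You correctly flag this and frame your proposal as a reconstruction of the cited proof. The overall two-phase plan (degenerate to a decoupled normal form, then reach any target by a single one-parameter family), your Step~(b), and your heuristic for why iterated block-extraction forces a cost exponential in $n$ are all in the right spirit.

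There is, however, a real gap in your Step~(a). You assert that after making $\ell_1,\dots,\ell_m$ into coordinate functions one can ``degenerate $x$ so as to remove the dependence of the $q_i$ on those coordinates'' while keeping the $\ell_j$ intact. No $\GL(V)$-degeneration does this in general. If $\gamma_t\cdot\ell_j\to\ell_j$ for every $j$, then $\gamma_t$ converges to the identity on the span of the $\ell_j$ inside $P_1(V)$, and hence any term of $q_i$ lying in that span, such as $\ell_1^2$, survives in the limit rather than dying. Concretely, take $V=\bk^2$, $m=n=1$, $\ell=x$, $q=x^2$: the $\GL_2$-orbit of $(x^2;x)$ is $\{(L^2;L):L\neq 0\}$, whose closure is $\{(L^2;L):L\in P_1(V)\}$, and this does not contain $(0;x)$. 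Your phrase ``passing to the relevant complementary subspace'' suggests you are thinking of restricting the $q_i$ to $B_0^{\perp}$, but restriction is a projection $P_2(V)\to P_2(B_0^{\perp})$, not a limit inside $P_2(V)^n\times P_1(V)^m$; it also annihilates the $\ell_j$. The toy example above violates the $\minrank$ hypothesis, so it does not contradict the theorem, but it shows that the separation you describe cannot be a purely formal preliminary move independent of that hypothesis. In \cite{eggermont} the linear forms are not peeled off first; they are carried through the block decomposition, which is also why the constant features $2(n{+}1)m$ in a coupled way rather than as an additive afterthought. So your strategy is broadly right, but the route through Step~(a) as written does not reach the decoupled normal form your Step~(b) needs.
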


We begin by proving an analog of the above theorem for $P_3(V)$:

\begin{theorem} \label{thm:surj}
Suppose $V$ is finite dimensional. Let $f \in P_3(V)$ have q-rank $r$ and let $W$ be a $d$-dimensional subspace of $V$ with
\begin{displaymath}
(2^d-1)(d^2 2^d+2(d+1)d-1)+d \le \tfrac{1}{2} \xi(r).
\end{displaymath}
Then the orbit-closure of $f$ surjects onto $P_3(W)$.
\end{theorem}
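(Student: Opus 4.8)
The plan is to combine the ``separability'' reduction (Proposition~\ref{prop:srk}), the rank estimate for the quadrics (Corollary~\ref{cor:minrank}), and the surjectivity theorem in degree~2 (Theorem~\ref{thm:deg2}), with the numerical bound in the hypothesis chosen precisely so that all three inputs can be fed in with $n=m=d$. First I would apply Proposition~\ref{prop:srk}: since $f$ has q-rank $r$, its orbit-closure contains a separable cubic $g$ with $\qrank(g)\ge\tfrac12\xi(r)$. Because the orbit-closure of $f$ contains that of $g$, it suffices to prove the conclusion for $g$; so I replace $f$ by $g$ and write $V=V_1\oplus V_2$ with $g=\sum_{i=1}^n\ell_iq_i$, $\ell_i\in P_1(V_1)$, $q_i\in P_2(V_2)$, after discarding redundant terms so that the $\ell_i$ are linearly independent and so are (a spanning subset of) the $q_i$; set $r'=\qrank(g)\ge\tfrac12\xi(r)$.

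Next I would extract a high-minrank subspace of quadrics. The hypothesis reads $(2^d-1)(d^22^d+2(d+1)d-1)+d\le\tfrac12\xi(r)\le r'$, which is exactly condition \eqref{eq:kscond} with $k=d$ and $s=d^22^d+2(d+1)d$. Corollary~\ref{cor:minrank} then produces a $d$-dimensional subspace $Q'\subset Q=\langle q_1,\dots,q_n\rangle$ with $\minrank(Q')\ge d^22^d+2(d+1)d$. Choose a basis $q_1',\dots,q_d'$ of $Q'$ and corresponding linear forms $\ell_1',\dots,\ell_d'$ so that $g'=\sum_{i=1}^d\ell_i'q_i'$ is still separable (the $\ell_i'$ can be taken linearly independent in $V_1$, the $q_i'$ span $Q'\subset P_2(V_2)$), and note that $g'$ lies in the orbit-closure of $g$: indeed, after a change of coordinates in $V_1$ one can write $g=\sum_{i=1}^n x_i q_i$, and using a one-parameter subgroup that scales $x_i$ for $i>d$ to zero (as in the proof of Proposition~\ref{prop:srk}) one degenerates $g$ to $g'$. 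Thus it is enough to show the orbit-closure of $g'$ surjects onto $P_3(W)$; and since $\minrank(Q')$ only increases or is preserved under this passage, we have the bound $\minrank(Q')\ge d\cdot d\cdot 2^d+2(d+1)d$, which is precisely the hypothesis of Theorem~\ref{thm:deg2} with ``$n$''$=d$, ``$m$''$=d$, and the $d$-dimensional subspace $W$ (more precisely, images of $W$ in $V_1$ and $V_2$ under the projections — see the obstacle below).

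Now I would invoke Theorem~\ref{thm:deg2}: the point $x=(q_1',\dots,q_d';\ell_1',\dots,\ell_d')\in P_2(V_2)^d\times P_1(V_1)^d$ has $\ell_i'$ linearly independent and $\minrank(\langle q_i'\rangle)\ge d\cdot d\cdot 2^d+2(d+1)d$, so its orbit-closure (under $\GL(V_1)\times\GL(V_2)$, hence under $\GL(V)$) surjects onto $P_2(W_2)^d\times P_1(W_1)^d$ for any $d$-dimensional $W_1\subset V_1$, $W_2\subset V_2$. Since the map $(q_1,\dots,q_d,\ell_1,\dots,\ell_d)\mapsto\sum\ell_iq_i$ is $\GL$-equivariant and continuous, the orbit-closure of $g'$ contains every cubic of the form $\sum_{i=1}^d\ell_iq_i$ with $\ell_i\in P_1(W_1)$, $q_i\in P_2(W_2)$. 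The final step is to check this image is all of $P_3(W)$: every cubic $h$ on a $d$-dimensional space has $\qrank(h)\le d$ by the trivial bound, so $h=\sum_{i=1}^d\ell_iq_i$ with $\ell_i,q_i$ forms on $W$ — and here one must take $W_1=W_2=W$ sitting diagonally, which forces a small adjustment to the separability setup.

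The main obstacle is this last compatibility issue: separability places the $\ell$'s in $V_1$ and the $q$'s in $V_2$, whereas to hit a general cubic on $W$ we need the $\ell$'s and $q$'s to live on the \emph{same} copy of $W$. I would resolve it by choosing, inside the orbit-closure, an element of $\GL(V)$ that identifies a chosen $d$-dimensional subspace of $V_1$ with one of $V_2$ — equivalently, observing that Theorem~\ref{thm:deg2} lets us surject onto $P_2(W')^d\times P_1(W'')^d$ for \emph{independent} choices of $d$-dimensional $W'\subset V_2$ and $W''\subset V_1$, and then precomposing the surjection $P_2(W)^d\times P_1(W)^d\to P_3(W)$ with a linear isomorphism $W''\cong W\cong W'$; since $\GL(V)$ acts transitively on $d$-dimensional subspaces and on isomorphisms between them, this does not leave the orbit-closure. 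Everything else is bookkeeping: tracking that discarding redundant terms and degenerating does not decrease $\minrank$, and that the numerology $(2^d-1)(d^22^d+2(d+1)d-1)+d\le\tfrac12\xi(r)$ is exactly \eqref{eq:kscond} for the parameters $k=d$, $s=d^22^d+2(d+1)d$.
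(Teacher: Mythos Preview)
Your proposal is correct and follows the same strategy as the paper (Proposition~\ref{prop:srk}, then Corollary~\ref{cor:minrank} with $k=d$ and $s=d^22^d+2(d+1)d$, then Theorem~\ref{thm:deg2}). The ``obstacle'' you flag is self-inflicted: the paper simply applies Theorem~\ref{thm:deg2} in the ambient space $V$ (viewing $q_i\in P_2(V_2)\subset P_2(V)$ and $\ell_i\in P_1(V_1)\subset P_1(V)$, with the full $\GL(V)$ acting), so the target is already $P_2(W)^d\times P_1(W)^d$ for the given $W\subset V$ and no compatibility issue between $V_1$ and $V_2$ ever arises---your resolution via $\GL(V)$-transitivity on $d$-dimensional subspaces is correct but unnecessary.
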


\begin{proof}
Applying Proposition~\ref{prop:srk}, let $g$ be a separable cubic in the orbit-closure of $f$ satisfying $\tfrac{1}{2} \xi(r) \le \qrank(g)$. Write $g=\sum_{i=1}^n \ell_i q_i$ where $\ell_i \in P_1(V_1)$ and $q_i \in P_2(V_2)$ and $V=V_1 \oplus V_2$ and the $\ell$'s and $q$'s are linearly independent. Let $Q$ be the span of the $q$'s. Put $s=d^2 2^d+2(d+1) d$ and $k=d$. Note that
\begin{displaymath}
(2^k-1)(s-1)+k \le \tfrac{1}{2} \xi(r).
\end{displaymath}
By Corollary~\ref{cor:minrank} we can therefore find a $k=d$ dimensional subspace $Q'$ of $Q$ with $\minrank(Q') \ge s$. Making a linear change of variables, we can assume $Q'$ is the span of $q_1, \ldots, q_d$. Let $g'=\sum_{i=1}^d \ell_i q_d$. This is in the orbit-closure of $g$ (and thus $f$) since it is obtained by setting $\ell_i=0$ for $i>d$. It is crucial here that the $q$'s and $\ell$'s are in different sets of variables, so that setting some $\ell$'s to 0 does not change the $q$'s. By Theorem~\ref{thm:deg2}, the orbit closure of $(q_1, \ldots, q_d, \ell_1, \ldots, \ell_d)$ in $P_2(V)^d \times P_1(V)^d$ surjects onto $P_2(W)^d \times P_1(W)^d$. Now let $h \in P_3(W)$. Since $\dim(W)=d$ we can write $h=\sum_{i=1}^d \ell_i' q_i'$ with $\ell_i' \in P_1(W)$ and $q_i' \in P_2(W)$. Pick $\gamma_t \in \GL(V)$ such that $(q_1', \ldots, q_d'; \ell_1', \ldots, \ell_d')$ in the image of $\lim_{t \to 0} \gamma_t \cdot (q_1, \ldots, q_d; \ell_1 \ldots, \ell_d)$. Then $h$ is the image of $\lim_{t \to 0} \gamma_t \cdot g'$, which completes the proof.
\end{proof}

\begin{corollary}[Theorem~\ref{mainthm3}]
Suppose that $f \in P_3(V)$ has q-rank $r>\exp(240)$ and let $W$ be a subspace of $V$ of dimension $d$ with $d<\tfrac{1}{3} \log{r}$ Then the orbit-closure of $f$ surjects onto $P_3(W)$.
\end{corollary}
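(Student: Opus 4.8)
This is a direct application of Theorem~\ref{thm:surj}. Given $f$ of q-rank $r$ and a $d$-dimensional subspace $W\subseteq V$, that theorem yields the desired surjectivity as soon as the numerical inequality
$$(2^d-1)\big(d^2 2^d + 2(d+1)d - 1\big) + d \;\le\; \tfrac12 \xi(r)$$
holds. So the entire task is to check that this inequality is implied by the hypotheses $r > \exp(240)$ and $d < \tfrac13\log r$; there is nothing else to prove.

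The plan is to bound the two sides crudely and compare. For the right-hand side I would use the definition $\xi(r)=\lfloor(\sqrt{8r+17}-3)/2\rfloor$ to extract a clean lower bound of the form $\xi(r)\ge \sqrt{2r}-c$ for a small constant $c$, so that $\tfrac12\xi(r)\ge \tfrac13\sqrt{2r}$ for $r$ in the relevant range. For the left-hand side, observe that it is increasing in $d$ and that its leading term is $d^2 4^d$; pulling out that term, the remaining factors only change it by a bounded multiplicative constant, so the left side is at most (say) $2\,d^2 4^d$ — and the finitely many small values of $d$ can be dispatched separately, since there the right side is enormous. Thus it suffices to verify $2\,d^2 4^d \le \tfrac13\sqrt{2r}$.

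Now I would feed in $d < \tfrac13\log r$, i.e.\ $r > e^{3d}$. On the right this gives $\sqrt{2r} > \sqrt2\,e^{3d/2}$, while on the left $4^d = e^{(\log 4)d}$. Since $\log 4 = 2\log 2 = 1.386\ldots < \tfrac32$, the exponent gap is positive, so $e^{3d/2}$ beats $4^d$ by a factor $e^{(3/2-\log 4)d}$ that grows exponentially in $d$ and eventually swamps the polynomial factor $d^2$; the constraint $r>\exp(240)$ forces $d$ to be either small (done already) or of size roughly $80$ and up, and in that regime one checks $e^{(3/2-\log4)d}\ge 6\,d^2$, which closes the estimate. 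Equivalently, keeping $r$ as the variable: $d<\tfrac13\log r$ gives $4^d < r^{(\log 4)/3} = r^{0.462\ldots}$, dominated by $r^{1/2}$ with surplus power $r^{1/2-(\log4)/3}=r^{0.037\ldots}$, and this surplus absorbs the factor $d^2 < (\log r)^2/9$ once $\log r > 240$.

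The only genuine obstacle is that this numerical verification is tight: $e^{3d/2}$ only narrowly dominates $4^d$, so the polynomial overhead $d^2$ is comparatively expensive and the threshold $\exp(240)$ leaves little room. One therefore has to track the constants honestly — keeping the multiplicative constant in the bound on the left side close to $1$, controlling the error in replacing $\xi(r)$ by $\sqrt{2r}$, and exploiting that $d$ is an integer strictly below $\tfrac13\log r$. Apart from this bookkeeping the statement is immediate from Theorem~\ref{thm:surj}.
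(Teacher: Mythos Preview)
Your proposal is correct and follows essentially the same route as the paper: reduce immediately to Theorem~\ref{thm:surj} and then verify the numerical inequality by comparing the exponential growth of the left side (dominated by $d^2 4^d$) against $\xi(r)\approx\sqrt{2r}$, using $d<\tfrac13\log r$. The paper organizes the same estimate slightly differently---it squares both sides, writing the condition as $\binom{D+1}{2}+D-1\le r$ with $D$ the left-hand side and then bounding $d^4 16^d$ by $20^d$---but this is the same comparison in disguise.

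The one place the paper is cleaner is in handling small $d$. Rather than dispatching those values by a direct estimate as you do, the paper first checks the inequality only for $d\ge 80$, and then observes that once the orbit closure surjects onto $P_3(W)$ for some $W$ of dimension $80$, it automatically surjects onto every smaller subspace. This geometric shortcut removes the need to verify anything for $d<80$, and is what pins down the threshold $r>\exp(240)$: one only needs $80\le\tfrac13\log r$. Your direct approach is fine in principle, but as you yourself note, the margin is thin, so the paper's device saves some honest bookkeeping.
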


\begin{proof}
By definition of $\xi$, we have $a \le \xi(r)$ (for an integer $a$) if and only if $\binom{a+1}{2}+a-1 \le r$. Thus the condition in the theorem is equivalent to $\binom{D+1}{2}+D-1 \le r$, where $D$ is the left side of the inequality in the theorem. This expression is equal to $d^4 16^d$ plus lower order terms, and is therefore less than $20^d$ for $d \gg 0$; in fact, $d \ge 80$ is sufficient. Thus for $d \ge 80$ it is enough that $d < \frac{\log{r}}{\log{20}}$; since $\log(20)<3$, it is enough that $d<\tfrac{1}{3} \log(r)$. Thus for $80 \le d \le \tfrac{1}{3} \log(r)$, the orbit closure of $f$ surjects onto $P_3(W)$. But it obviously then surjects onto smaller subspaces as well, so we only need to assume $80 \le \tfrac{1}{3} \log(r)$.
\end{proof}

\begin{theorem}[Theorem~\ref{mainthm2}]
Let $V$ be infinite dimensional. Suppose $Z \subset P_3(V)$ is Zariski closed, $\GL(V)$-stable, and contains elements of arbitrarily high q-rank. Then $Z=P_3(V)$.
\end{theorem}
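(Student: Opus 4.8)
The plan is to argue by contradiction. Suppose $Z \neq P_3(V)$. Since $P_3(V) = \Spec(\Sym(\Sym^3 V))$ is the spectrum of a polynomial ring, it is reduced and irreducible, so the proper closed subset $Z$ has a nonzero regular function $F$ in its ideal. The coordinate ring $\Sym(\Sym^3 V)$ is the directed union $\varinjlim_W \Sym(\Sym^3 W)$ over finite-dimensional subspaces $W \subseteq V$, where the structure map $\Sym(\Sym^3 W) \to \Sym(\Sym^3 V)$ is dual to the restriction morphism $\pi_W \colon P_3(V) \to P_3(W)$. Hence $F$ lies in $\Sym(\Sym^3 W_0)$ for some finite-dimensional $W_0 \subseteq V$, i.e.\ $F = \pi_{W_0}^{*}(F_0)$ for a nonzero $F_0 \in \bk[P_3(W_0)]$; write $d = \dim W_0$ and let $N = N(d) = (2^d-1)(d^2 2^d + 2(d+1)d - 1) + d$ be the quantity from Theorem~\ref{thm:surj}.

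Next I would pass to a finite-dimensional subspace on which that theorem applies. Since $Z$ contains forms of arbitrarily high q-rank, choose $f \in Z$ with $\qrank(f)$ large enough that $\tfrac12 \xi(\qrank(f)) \ge N$ (this is automatic if $\qrank(f)=\infty$, as $\xi$ is nondecreasing). Applying Proposition~\ref{prop:inf} to the directed union of the finite-dimensional subspaces of $V$ containing $W_0$, together with monotonicity of $\xi$, yields such a subspace $V' \supseteq W_0$ with $\tfrac12 \xi(\qrank(f|_{V'})) \ge N$. Then Theorem~\ref{thm:surj}, applied to $f|_{V'} \in P_3(V')$ and the subspace $W_0 \subseteq V'$, shows that the orbit-closure $\overline{\GL(V') \cdot (f|_{V'})}$ in $P_3(V')$ is carried onto all of $P_3(W_0)$ by the restriction map $\rho \colon P_3(V') \to P_3(W_0)$. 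Note $\pi_{W_0} = \rho \circ \pi_{V'}$, where $\pi_{V'} \colon P_3(V) \to P_3(V')$ is restriction.

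Finally I would transfer this back to $Z$. Fix a complement $V = V' \oplus V''$, and for $g \in \GL(V')$ let $\tilde g \in \GL(V)$ be the extension of $g$ by the identity on $V''$. The map $\pi_{V'}$ is equivariant for the resulting embedding $\GL(V') \hookrightarrow \GL(V)$, so $\pi_{V'}(\tilde g \cdot f) = g \cdot (f|_{V'})$, while $\tilde g \cdot f \in Z$ since $Z$ is $\GL(V)$-stable. Put $G_0 = \rho^{*}(F_0) \in \bk[P_3(V')]$. Then for every $g \in \GL(V')$,
\[
G_0\bigl(g \cdot (f|_{V'})\bigr) = F_0\bigl(\rho(\pi_{V'}(\tilde g \cdot f))\bigr) = F\bigl(\tilde g \cdot f\bigr) = 0 ,
\]
so $G_0$ vanishes on $\GL(V') \cdot (f|_{V'})$, hence on its Zariski closure. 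Since $\rho$ maps that closure onto all of the affine space $P_3(W_0)$, the function $F_0$ vanishes at every $\bk$-point of $P_3(W_0)$, and as $\bk$ is infinite this forces $F_0 = 0$, hence $F = 0$, a contradiction. Therefore $Z = P_3(V)$.

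I expect the only genuine obstacles to be bookkeeping rather than substance: carefully checking that a function vanishing on $Z$ is pulled back from a finite-dimensional subspace (via the $\varinjlim$ description of the coordinate ring), and verifying the equivariance $\pi_{V'}(\tilde g \cdot f) = g \cdot (f|_{V'})$ that lets a $\GL(V')$-orbit downstairs be lifted into $Z$. All the geometric content is already packaged in Theorem~\ref{thm:surj} (and, through it, in Proposition~\ref{prop:srk}, Corollary~\ref{cor:minrank}, and Theorem~\ref{thm:deg2}), so once the finite-dimensional reduction is set up the argument is essentially formal.
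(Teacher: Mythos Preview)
Your proof is correct and follows essentially the same route as the paper: reduce to a finite-dimensional target $W_0$, invoke Proposition~\ref{prop:inf} to find a finite-dimensional $V'\supseteq W_0$ on which $f$ still has large q-rank, apply Theorem~\ref{thm:surj}, and lift the $\GL(V')$-orbit back into $Z$ via a complement. The only difference is cosmetic—the paper states directly that it suffices to show $Z$ surjects onto every $P_3(W)$, whereas you phrase this as a contradiction via a nonvanishing function $F$ pulled back from some $W_0$; your explicit verification of the equivariance $\pi_{V'}(\tilde g\cdot f)=g\cdot(f|_{V'})$ and the vanishing argument for $F_0$ in fact spell out what the paper's terse ``$Z$ surjects onto the orbit closure of $f|_{V'}$'' is implicitly using.
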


\begin{proof}
It suffices to show that $Z$ surjects onto $P_3(W)$ for all finite dimensional $W \subset V$. Thus let $W$ of dimension $d$ be given. Let $r$ be sufficiently large so that the inequality in Theorem~\ref{thm:surj} is satisfied and let $f \in Z$ have q-rank at least $r$. By Proposition~\ref{prop:inf}, there exists a finite dimensional subspace $V'$ of $V$ containing $W$ such that $f \vert_{V'}$ has q-rank at least $r$. Theorem~\ref{thm:surj} implies that the orbit-closure of $f \vert_{V'}$ surjects onto $P_3(W)$. Since $Z$ surjects onto the orbit closure of $f \vert_{V'}$, the result follows.
\end{proof}

It was explained in the introduction how this implies Theorem~\ref{mainthm}, so the proof is now complete.

\section{A computation of q-rank} \label{sec:example}

Fix a positive integer $n$, and consider the cubic
\begin{displaymath}
f = x_1 y_1 z_1 + \cdots + x_n y_n z_n
\end{displaymath}
in the polynomial ring $\bk[x_i, y_i, z_i]_{1 \le i \le n}$ introduced in Example~\ref{ex1}. We now show:

\begin{proposition}
The above cubic $f$ has q-rank $n$.
\end{proposition}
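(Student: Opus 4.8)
The upper bound $\qrank(f) \le n$ is immediate: the expression $f = \sum_{i=1}^n x_i (y_i z_i)$ already exhibits $f$ as a sum of $n$ products $\ell_i q_i$ with $\ell_i = x_i \in P_1$ and $q_i = y_i z_i \in P_2$. So the entire content is the lower bound $\qrank(f) \ge n$. By Proposition~\ref{prop:geom}, this is equivalent to showing that $f$ does not vanish on any linear subspace $W$ of codimension $\le n-1$ of the $3n$-dimensional space $V$ spanned by the $x_i, y_i, z_i$; equivalently, whenever $\ell_1, \ldots, \ell_{n-1}$ are linear forms, the cubic $f$ is not identically zero on $W = \bigcap_{j=1}^{n-1} \ker(\ell_j)$.

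The plan is a dimension-counting argument on the vanishing locus. First I would reduce to the geometric formulation: suppose for contradiction that $f$ vanishes on some subspace $W \subseteq V$ with $\dim W \ge 2n+1$ (codimension $\le n-1$). The idea is to exploit the block structure of $f$ relative to the decomposition $V = \bigoplus_{i=1}^n V_i$ where $V_i = \langle x_i, y_i, z_i \rangle$, together with the three-way $\bS_3$-type symmetry of each term $x_i y_i z_i$. Concretely, consider the coordinate subspaces: let $X = \langle x_1, \ldots, x_n \rangle$, $Y = \langle y_1, \ldots, y_n \rangle$, $Z = \langle z_1, \ldots, z_n \rangle$, and look at how $W$ meets these. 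The key observation is that on the subspace $X \oplus Y$ (where all $z_i = 0$), the cubic $f$ restricts to $0$ identically, and similarly for $X \oplus Z$ and $Y \oplus Z$; so vanishing of $f$ only constrains $W$ through the components of its vectors that are "spread across" all three blocks. I would then try to produce, inside $W$, a vector $w$ of the form $w = \sum_i (a_i x_i + b_i y_i + c_i z_i)$ with some index $i_0$ having $a_{i_0}, b_{i_0}, c_{i_0}$ all nonzero, and more: a configuration forcing $f(w) \ne 0$. Since $\dim W \ge 2n+1 > 2n$ and each of the three ``bilinear-degenerate'' coordinate subspaces $X\oplus Y$, $X \oplus Z$, $Y \oplus Z$ has dimension $2n$, $W$ cannot be contained in any of them, and a careful pigeonhole/intersection argument on the projections $W \to X$, $W \to Y$, $W \to Z$ should force the existence of a point where $f$ is nonzero. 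An alternative, possibly cleaner route: use bilinearity. Fix generic $u \in W$ and consider the quadratic form $q_u(w) = \partial_u f(w)$ (the polarization of $f$); if $f|_W \equiv 0$ then all these quadrics $q_u$, $u \in W$, vanish on $W$ as well, and one computes $q_u$ explicitly as $\sum_i (a_i y_i z_i + b_i x_i z_i + c_i x_i y_i)$ when $u = \sum_i(a_i x_i + b_i y_i + c_i z_i)$. Choosing $u$ with all coordinates nonzero (possible since $W \not\subseteq$ any coordinate hyperplane when $\dim W$ is large), $q_u$ has rank $2n$ as a quadratic form on $V$, yet it must vanish on the $(\ge 2n+1)$-dimensional space $W$ — but a rank-$2n$ quadratic form on a $3n$-dimensional space vanishes only on subspaces of dimension $\le 3n - 2n + (\text{nullity contribution})$; more precisely its maximal isotropic-plus-radical subspaces have dimension $n + n = 2n$. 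This is the contradiction.

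So the cleanest line is: (1) state the reduction to ``$f$ vanishes on no subspace of codimension $< n$'' via Proposition~\ref{prop:geom}; (2) assume $f|_W = 0$ with $\operatorname{codim} W \le n-1$, hence $\dim W \ge 2n+1$; (3) show $W$ is not contained in the union of the coordinate hyperplanes $\{x_i = 0\}, \{y_i=0\}, \{z_i=0\}$ — this needs $\dim W > 2n$, which holds — so pick $u \in W$ with all $3n$ coordinates nonzero; (4) compute the polarization $q_u := \tfrac{1}{2}(f(u + \cdot) - f(u) - f(\cdot))$ restricted to directions, obtaining $q_u = \sum_{i=1}^n (a_i y_i z_i + b_i x_i z_i + c_i x_i y_i)$, a quadratic form of full rank $2n$ on $V$ (its Gram matrix is block-diagonal with $3\times 3$ blocks, each of rank $2$ since $a_i,b_i,c_i \ne 0$); (5) observe $q_u$ vanishes on $W$ because $f|_W = 0$; (6) derive a contradiction from the fact that a quadratic form of rank $2n$ on a $3n$-dimensional space cannot vanish identically on a subspace of dimension $2n+1$ (its totally isotropic subspaces have dimension at most $n + (3n - 2n) = 2n$, combining the $n$-dimensional radical with a maximal isotropic of the rank-$2n$ nondegenerate part). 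I expect step (4)–(6) to be the crux — in particular pinning down the exact maximal dimension of a subspace on which a quadratic form of given rank vanishes, which is $\lfloor \tfrac{\operatorname{rank}}{2} \rfloor + (\dim V - \operatorname{rank}) = n + n = 2n$ here — while steps (1)–(3) are routine. The one subtlety to watch in step (3) is characteristic: since $\operatorname{char} \bk \ne 2, 3$, the polarization identity and the rank computation for $q_u$ are valid, and the ground field being algebraically closed lets us diagonalize freely.
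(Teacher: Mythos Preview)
Your approach is genuinely different from the paper's, and the core idea is sound. The paper gives a direct, induction-free argument: it rephrases the lower bound as ``whenever $x_i, y_i, z_i$ span the dual of a $(2n{+}1)$-dimensional space $V$, the form $\sum x_i y_i z_i$ is nonzero in $P_3(V)$,'' then runs a three-phase greedy procedure to extract from among the given elements a basis $X_1,\dots,X_r, Y_1,\dots,Y_s, Z_1,\dots,Z_t$ of $V^*$, and finally checks that the monomial $X_1 Y_1 Z_1$ occurs in $f$ with coefficient exactly~$1$. Your route via the directional-derivative quadratic $q_u$ and an isotropic-dimension bound is more structural; it trades the paper's combinatorics for linear algebra plus (as you will see) an induction.

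Step~(3), however, has a real gap: the inequality $\dim W > 2n$ does \emph{not} force $W$ out of every coordinate hyperplane, since each $\{x_i=0\}$ has dimension $3n-1$, which is at least $2n+1$ for $n\ge 2$. (You may be conflating this with your earlier, correct observation that $W$ cannot sit inside the $2n$-dimensional spaces $X\oplus Y$, $X\oplus Z$, $Y\oplus Z$.) What your argument actually needs is weaker---only that each block projection $W \to V_i := \langle x_i,y_i,z_i\rangle$ is nonzero, so that a generic $u\in W$ has every block $(a_i,b_i,c_i)\ne (0,0,0)$ and hence every $3\times 3$ block of $q_u$ has rank $\ge 2$, giving $\operatorname{rank}(q_u)\ge 2n$---but even this can fail once $3(n-1)\ge 2n+1$, i.e.\ for $n\ge 4$. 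The fix is an induction on $n$: if $W \subseteq \bigoplus_{j\ne i} V_j$ for some $i$, then $f|_W$ coincides with the $(n{-}1)$-block cubic restricted to $W$, and $\dim W \ge 2n+1 > 2(n{-}1)+1$ lets the inductive hypothesis conclude. Separately, your rank count in step~(4) is off: when $a_i b_i c_i \ne 0$ each $3\times 3$ block has determinant proportional to $a_i b_i c_i \ne 0$ and hence rank~$3$, so $\operatorname{rank}(q_u)=3n$, not $2n$; this only strengthens your conclusion.
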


It is clear that $\qrank(f) \le n$. To prove equality, it suffices by Proposition~\ref{prop:geom} to show that $f \vert_V \ne 0$ if $V$ is a codimension $n-1$ subspace of $\bk^{3n}$. This is exactly the content of the following proposition:

\begin{proposition}
Let $V$ be a vector space of dimension $2n+1$ and let $(x_i,y_i,z_i)_{1 \le i \le n}$ be a collection of elements that span $P_1(V)$. Then $f=x_1y_1z_1+\cdots+x_ny_nz_n \in P_3(V)$ is non-zero.
\end{proposition}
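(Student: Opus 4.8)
The plan is to argue by contradiction: suppose $f = x_1 y_1 z_1 + \cdots + x_n y_n z_n$ vanishes identically in $P_3(V)$, where $\dim V = 2n+1$ and the $3n$ linear forms $x_i, y_i, z_i$ span the $(2n+1)$-dimensional space $P_1(V)$. Since $3n$ forms span a space of dimension $2n+1$, there are $n-1$ independent linear relations among them. I would like to use these relations, together with the vanishing of $f$, to derive a contradiction by an induction on $n$. The base case $n=1$ is clear: $f = x_1 y_1 z_1$ with $x_1, y_1, z_1$ spanning a $3$-dimensional space, hence independent, so $f \neq 0$.

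For the inductive step, the key move is to exploit one of the linear relations among the $x_i, y_i, z_i$. Write such a relation and consider its "support" — which of the variables actually appear. The main case analysis is on whether some variable, say a $z_i$, can be expressed as a linear combination of the remaining $3n-1$ forms in a way that lets us substitute and reduce. Concretely, I would substitute $z_n = \sum (\text{linear combination of the other forms})$ into the term $x_n y_n z_n$, expand, and collect terms. The resulting cubic $f'$ lives in $P_3(V)$ still, but now the monomials $x_i y_i z_i$ for $i < n$ are perturbed by the new terms coming from the substitution. One then wants to pass to a quotient of $V$ (i.e. set some linear form to zero, decreasing $\dim V$ by $1$ and decreasing $n$ by $1$) so that the inductive hypothesis applies. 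The delicate bookkeeping is to ensure that after this reduction the remaining forms still span a space of exactly the right dimension $2(n-1)+1 = 2n-1$, so that the inductive hypothesis is genuinely available; this uses that we removed exactly two forms (say $x_n$ and $y_n$ by setting them to $0$, if $z_n$ was redundant) and the dimension dropped by exactly $2$.

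The main obstacle I anticipate is the degenerate subcases in the induction: it may happen that every nontrivial linear relation among the $x_i, y_i, z_i$ only involves forms within a single triple $\{x_i, y_i, z_i\}$, or only involves forms of "the same letter" across triples, or that after substitution the cross-terms conspire to cancel. Handling these requires a careful choice of which relation to use and possibly a preliminary normalization — for instance, using a change of coordinates within each triple $\{x_i, y_i, z_i\}$ (which does not change the form $x_i y_i z_i$ up to an automorphism preserving the structure, since each monomial is only determined up to permuting and scaling its three factors) to put the relations into a standard shape. An alternative, possibly cleaner, route would be to avoid induction entirely and instead argue directly that the cubic $f$, viewed in $\Sym^3(P_1(V))$ modulo the ideal of relations defining $V$ as a quotient of $\bk^{3n}$, cannot be zero by exhibiting an explicit linear functional (a point of $V$, or a derivative) on which it does not vanish — but I expect the combinatorics of choosing such a point to be at least as involved as the induction, so I would pursue the inductive argument first and fall back to the explicit-point construction only for the stubborn degenerate cases.
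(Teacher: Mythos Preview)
Your proposal is a plan rather than a proof, and the gap you yourself flag is real: the degenerate cases in the induction are not handled, and closing them is where the actual work lies. Concretely, your induction needs that after deleting the $n$-th triple and restricting, the remaining $3(n-1)$ forms span a space of dimension exactly $2n-1$. But if $x_n$ and $y_n$ happen to be linearly dependent (or one of them is zero), restricting to $\ker(x_n)\cap\ker(y_n)$ drops the dimension by at most $1$, not $2$, and you land in a space of dimension $\ge 2n$, where the inductive hypothesis as stated does not apply. This can be repaired by strengthening the hypothesis to $\dim V \ge 2n+1$ (and noting that the extreme case $\dim V = 3n$, where all the forms are independent, is immediate), but you have not done so, and the case analysis on the shape of the chosen relation --- which triple it touches, how many forms of that triple appear in it, whether two of them are independent --- still has to be carried out. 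Your substitution step is also a red herring: rewriting $z_n$ as a combination of the others does not change $f$ as an element of $P_3(V)$, so nothing is gained by expanding before restricting.

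The paper takes precisely your ``alternative route'' and makes it work directly, with no induction and no case analysis. It runs a three-phase greedy procedure on the $n\times 3$ array of forms to extract a basis $X_1,\dots,X_r,\,Y_1,\dots,Y_s,\,Z_1,\dots,Z_t$ of $P_1(V)$: first sweep down the rows picking one new independent form per row (moved to the first column) as long as possible; then sweep back through those rows picking a second independent form per row (second column); then a third pass for the third column. Because $\dim V = 2n+1 > 2n$, the third pass is nonempty, so $t\ge 1$. The structural information recorded by the greedy choices then pins down the coefficient of the single monomial $X_1Y_1Z_1$ in $f$: every term $x_iy_iz_i$ with $i>r$ is cubic in the $X$-variables, every term with $s<i\le r$ has no $Z$-variable, and for $i\le s$ the term is $X_iY_iz_i$, which contributes to $X_1Y_1Z_1$ only when $i=1$. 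Hence the coefficient is $1$ and $f\ne 0$. This is exactly the ``explicit linear functional'' idea you mention at the end, and it sidesteps all of the degeneracy bookkeeping your induction would require.
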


\begin{proof}
Arrange the given elements in a matrix as follows:
\begin{displaymath}
\begin{pmatrix}
x_1 & y_1 & z_1 \\
\vdots & \vdots & \vdots \\
x_n & y_n & z_n
\end{pmatrix}
\end{displaymath}
Note that we are free to permute the rows and apply permutations within a row without changing the value of $f$, e.g., we can switch the values of $x_1$ and $y_1$, or switch $(x_1, y_1, z_1)$ with $(x_2, y_2, z_2)$, without changing $f$. We now proceed to find a basis for $V$ among the elements in the matrix according to the following three-phase procedure.

{\it Phase 1.} Find a non-zero element of the matrix, and move it (using the permutations mentioned above) to the $x_1$ position. Now in rows $2, \ldots, n$ find an element that is not in the span of $x_1$ (if one exists) and move it to the $x_2$ position. Now in rows $3, \ldots, n$ find an element that is not in the span of $x_1$ and $x_2$ (if one exists) and move it to the $x_3$ position. Continue in this manner until it is no longer possible; suppose we go $r$ steps. At this point, $x_1, \ldots, x_r$ are linearly independent, and $x_i$, $y_i$, and $z_i$, for $r<i$ all belong to their span.

{\it Phase 2.} From rows $1, \ldots r$ find an element in the second or third column not in the span of $x_1, \ldots, x_r$ and move it (using permutations that fix the first column) to the $y_1$ position. Next from rows $2, \ldots, r$ find an element in the second or third column not in the span of $x_1, \ldots, x_r, y_1$ and move it to the $y_2$ position. Continue in this manner until it is no longer possible; suppose we go $s$ steps. At this point, $x_1, \ldots, x_r, y_1, \ldots, y_s$ form a linearly independent set, and the elements $y_i, z_i$ for $s < i \le r$ belong to their span. The conclusion from Phase~1 still holds as well.

{\it Phase 3.} Now carry out the same procedure in the third column. That is, from rows $1, \ldots, s$ find an element in the third column not in the span of $x_1, \ldots, x_r, y_1, \ldots, y_s$ and move it (by permuting rows) to the $z_1$ position. Then from rows $2, \ldots, s$ find an element in the third column not in the span of $x_1, \ldots, x_r, y_1, \ldots, y_s, z_1$ and move it to the $z_2$ position. Continue in this manner until it is no longer possible; suppose we go $t$ steps. At this point, $x_1, \ldots, x_r, y_1, \ldots, y_s, z_1, \ldots, z_t$ forms a basis of $V$. The conclusions from Phases~1 and~2 still hold.

For clarity, we write $X_1, \ldots, X_r, Y_1, \ldots, Y_s, Z_1, \ldots, Z_t$ for our basis. We note that because $\dim(V)>2n$ we must have $t \ge 1$. The ring $\Sym(V^*)$ is identified with the polynomial ring in the $X$, $Y$, $Z$ variables. We now determine the coefficient of $X_1 Y_1 Z_1$ in $m_i=x_i y_i z_i$. If $i>r$ then $m_i$ has degree~3 in the $X$ variables, and so the coefficient is~0. If $s<i \le r$ then $m_i$ has degree~0 in the $Z$ variables, and so again the coefficient is~0. Finally, suppose that $i<s$. Then $m_i=X_iY_i z_i$. The only way this can contain $X_1Y_1Z_1$ is if $i=1$. We thus see that the coefficient of $X_1Y_1Z_1$ in $m_i$ is~0 except for $i=1$, in which case it is~1, and so $f=\sum_{i=1}^n m_i$ is non-zero.
\end{proof}

\begin{remark}
It follows from the above results and Proposition~\ref{prop:inf} that the cubic $\sum_{i=1}^{\infty} x_i y_i z_i$ has infinite q-rank.
\end{remark}

\end{document}